\documentclass[11pt,reqno]{amsart}
\usepackage{indentfirst, amssymb, amsmath, amsthm, mathrsfs, setspace, indentfirst, enumerate,  mathrsfs, amsmath, amsthm}
\usepackage[bookmarksnumbered, colorlinks, plainpages]{hyperref}
\usepackage{mathrsfs}
\usepackage{cite}
\usepackage{tikz}
\usepackage{graphicx}
\usepackage{placeins}
\usetikzlibrary{arrows.meta,calc,decorations.pathreplacing,positioning,shadows.blur,backgrounds,calc,fit,
	decorations.pathreplacing}
\usetikzlibrary{shapes.geometric}
\usepackage{float}
\usepackage{pgfplots}
\usepackage{booktabs}
\usepackage{threeparttable} 
\usetikzlibrary{shadows}
\pgfplotsset{compat=1.18}
\usepackage[table]{xcolor}
\usepackage{array}
\usepackage{makecell}
\pgfplotsset{compat=1.18}
\usepackage{smartdiagram}
\usesmartdiagramlibrary{additions}
\definecolor{myblue}{RGB}{34,72,140}
\definecolor{lightblue}{RGB}{237,243,252}
\definecolor{lightgreen}{RGB}{239,248,239}
\definecolor{lightorange}{RGB}{255,247,235}
\definecolor{lightred}{RGB}{252,241,241}
\definecolor{HeaderBlue}{RGB}{40,76,80}
\definecolor{LightBlue}{RGB}{235,242,250}
\definecolor{AltRow}{RGB}{248,250,253}
\usepgfplotslibrary{fillbetween}
\definecolor{curveblue}{RGB}{30,90,220}
\definecolor{negred}{RGB}{255,182,193}
\definecolor{posgreen}{RGB}{40,150,70}
\definecolor{rootdark}{RGB}{120,40,20}
\usepgfplotslibrary{fillbetween}
\definecolor{c1}{RGB}{70,40,200}
\definecolor{c2}{RGB}{213,14,0}
\definecolor{c3}{RGB}{0,158,10}
\definecolor{c4}{RGB}{230,126,34}
\newtheorem*{theoA}{Theorem A}
\newtheorem*{theoB}{Theorem B}
\newtheorem*{theoC}{Theorem C}

\newtheorem*{cor A}{Corollary A}
\newtheorem*{cor B}{Corollary B}

\newtheorem{theo}{Theorem}[section]
\newtheorem{lem}{Lemma}[section]
\newtheorem{cor}{Corollary}[section]

\newtheorem{defi}{Definition}[section]
\newtheorem{rem}{Remark}[section]
\newcommand{\ol}{\overline}
\newcommand{\be}{\begin{equation}}
	\newcommand{\ee}{\end{equation}}
\newcommand{\beas}{\begin{eqnarray*}}
	\newcommand{\eeas}{\end{eqnarray*}}
\newcommand{\bea}{\begin{eqnarray}}
	\newcommand{\eea}{\end{eqnarray}}

\numberwithin{equation}{section}

\begin{document}
\title[T\MakeLowercase {he} F\MakeLowercase{ekete}-S\MakeLowercase{zeg\"{o} problem for}......]{\LARGE T\Large\MakeLowercase {he} F\MakeLowercase{ekete}-S\MakeLowercase{zeg\"{o} problem for starlike mappings of order $\alpha$ and quasi-convex mappings of Type $B$ in the unit balls of complex} B\MakeLowercase{anach spaces}}

\date{}
\author[S. M\MakeLowercase{ajumder}, A. B\MakeLowercase{anerjee}, G. H\MakeLowercase{aldar} \MakeLowercase{and} S. P\MakeLowercase{anja} ]{S\MakeLowercase{ujoy} M\MakeLowercase{ajumder}$^{1}$, A\MakeLowercase{bhijit} B\MakeLowercase{anerjee}$^2$,  G\MakeLowercase{outam} H\MakeLowercase{aldar}$^3$$^*$  \MakeLowercase{and} S\MakeLowercase{hantanu} P\MakeLowercase{anja}$^4$}

\address{$^{1}$ Sujoy Majumder, Department of Mathematics, Raiganj University, Raiganj, West Bengal-733134, India.}
\email{sm05math@gmail.com, sjm@raiganjuniversity.ac.in}
\address{$^{2}$ Abhijit Banerjee, Department of Mathematics, University of Kalyani, West Bengal 741235, India.}
\email{abanerjee\_kal@yahoo.co.in, abhijitbanerjee@klyuniv.ac.in}
\address{$^3$ Goutam Haldar, Department of Mathematics, Ghani Khan Choudhury Institute of Engineering and Technology, Narayanpur, Malda-732141, West Bengal, India.}
\email{goutamiitm@gmail.com, goutamiit1986@gmail.com}
\address{$^4$ Shantanu Panja, Department of Mathematics, University of Kalyani, West Bengal 741235, India.}
\email{panjasantu07@gmail.com, shantanumath26@klyuniv.ac.in}

\renewcommand{\thefootnote}{}
\footnote{2020 \emph{Mathematics Subject Classification}: 32H02, 30C45.}
\footnote{\emph{Key words and phrases}: Biholomorphic mapping, Starlike mapping, quasi-convex mapping, Coefficient problems, Zalcman conjecture, Fekete-Szeg\"{o} problem
.}
\footnote{*\emph{Corresponding Author}: Goutam Halder.}

\renewcommand{\thefootnote}{\arabic{footnote}}
\setcounter{footnote}{0}

\begin{abstract}
	 We establish sharp estimates for the generalized Zalcman functional corresponding to the index $n=2$ for normalized starlike mappings of order $\alpha$ and normalized quasi-convex mappings of type $B$ in the unit ball of a complex Banach space and on the unit polydisk in $\mathbb{C}^n$. We also derive sharp Fekete–Szeg\"o inequalities for normalized starlike mappings of order $\alpha$ and normalized quasi-convex mappings of type $B$ in the unit ball of a complex Banach space. Our results extend classical coefficient estimates to higher-dimensional holomorphic mappings and provide a partial confirmation of the generalized Zalcman conjecture in higher dimensions.

\end{abstract}
\thanks{Typeset by \AmS -\LaTeX}
\maketitle
\section{\bf Introduction}
Geometric function theory is a fundamental branch of complex analysis that studies how the analytic properties of holomorphic mappings determine their geometric behaviour. At the heart of this theory lies the class of normalized univalent functions, which provides a natural connection between conformal mapping, geometric inequalities, extremal problems and coefficient theory. The Taylor coefficients of these functions carry subtle geometric information and satisfy sharp estimates for various nonlinear coefficient functionals. This has become one of the central themes of the subject. This line of research has led to several landmark problems, including the Bieberbach conjecture, the Fekete--Szeg\"{o} problem and the Zalcman conjecture and continues to inspire investigations into increasingly broad subclasses of univalent functions.
\par Denote by $\mathcal{S}(\mathbb{U})$ the collection of functions $f$ univalent on the unit disk $\mathbb{U}=\{z\in\mathbb{C}:|z|<1\}$, subject to the 
normalization $f(0)=0$ and $f'(0)=1$. Every such $f$ admits the expansion
\begin{align}\label{eq1}
	f(z)=z+\sum_{n=2}^{\infty}a_nz^n,\qquad z\in\mathbb{U}.
\end{align}

In the late 1960s, L. Zalcman posed the conjecture that if $f \in \mathcal{S}(\mathbb{U})$ is given by the power series \eqref{eq1}, then its Taylor coefficients satisfy the inequality
\begin{align}\label{eq2}
\displaystyle	
\vert{}a_n^2 - a_{2n-1}\vert{} \leqslant (n - 1)^2, \quad \text{for all } n \geqslant 2,
\end{align}
with equality holding if and only if $f$ is the Koebe function or one of its rotations.

Brown and Tsao \cite{Brown-Tsao-1986} first pointed out that the Zalcman conjecture implies the Bieberbach conjecture.
A well-known consequence of the area theorem shows that inequality \eqref{eq2} holds for $n = 2$. For $f \in \mathcal{S}(\mathbb{U})$, Krushkal \cite{Krushkal-1995,Krushkal-2010} proved the conjecture for $n = 3$, and subsequently extended the result to $n = 4, 5,$ and $6$. For a simple and elegant proof of the case $n = 3$, we refer the reader to \cite{Krushkal-2010}. For $n \geqslant 7$, the conjecture for $f \in \mathcal{S}(\mathbb{U})$ remains open.\vspace{1.2mm}

A normalized univalent function $f \in \mathcal{S}(\mathbb{U})$ is said to be starlike (with respect to the origin) if and only if
\begin{align*}
\displaystyle \Re\bigg\{ \frac{z f'(z)}{f(z)} \bigg\}> 0,\qquad z \in \mathbb{U}.
\end{align*}
 The subclass consisting of all such starlike functions is denoted by $\mathcal{S}^*(\mathbb{U})$. The Zalcman conjecture has been studied for several subclasses of $\mathcal{S}(\mathbb{U})$. Brown and Tsao \cite{Brown-Tsao-1986} settled it for both starlike and typically real functions, while Ma \cite{Ma-1988} later proved that it holds for close-to-convex functions as well.

The classical Fekete–Szeg\"{o} inequality for the coefficients of $f \in \mathcal{S}(\mathbb{U})$, proved via Loewner’s method, is given by (see \cite{Fekete-Szego-1933}; e.g.,
\cite[p. 124]{Pommerenke-1975}):
\begin{align*}
\displaystyle
\vert{}a_3 - \lambda a_2^2\vert{} \le 1 + 2 \exp\left(-2\lambda/(1 - \lambda)\right), \quad \text{for } \lambda \in [0, 1).
\end{align*}

Koepf \cite{Koepf-1987} obtained the following Fekete–Szegö inequality for normalized starlike functions on $\mathbb{U}$:
\begin{align*}
\displaystyle
\vert{}a_3 - \lambda a_2^2\vert{} \le \max\{1, \vert{}3 - 4\lambda\vert{}\}, \quad \text{for } \lambda \in \mathbb{C}.
\end{align*}
Later, Kanas \cite{Kanas-2012} provided a unified approach to the Fekete–Szegö problem on $\mathbb{U}$.

Let $f \in \mathcal{S}(\mathbb{U})$ and $0 \le \alpha < 1$. Then $f \in \mathcal{S}^*_{\alpha}(\mathbb{U})$ if and only if \begin{align*}
\displaystyle \Re\bigg\{ \frac{z f'(z)}{f(z)} \bigg\} > \alpha,\quad z \in \mathbb{U}.
\end{align*}
 In 1969, Keogh and Merkes \cite{Keogh-Merkes-1969} obtained sharp estimates for the Fekete--Szeg\"{o} problem concerning
 starlike functions of order $\alpha$.
\begin{theoA}Let $\alpha\in[0,1)$. If $f \in \mathcal{S}^*_{\alpha}(\mathbb{U})$ is of the form \eqref{eq1} and $\mu$ be any complex number, then 
\begin{align*}
\displaystyle
|a_3-\mu a_2^2|\leq (1-\alpha)\max\{1,|2(1-\alpha)(2\mu-1)-1|\}.
\end{align*}
The inequality is sharp.	
\end{theoA}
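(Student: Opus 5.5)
We reduce the problem to a coefficient estimate for a function with positive real part. Since $f\in\mathcal{S}^*_{\alpha}(\mathbb{U})$, the function
\begin{align*}
p(z)=\frac{1}{1-\alpha}\left(\frac{zf'(z)}{f(z)}-\alpha\right)=1+c_1z+c_2z^2+\cdots
\end{align*}
belongs to the Carath\'eodory class, that is, $\Re p>0$ and $p(0)=1$.

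\textbf{Step 1: coefficients of $f$.} Write $zf'(z)=f(z)\bigl(\alpha+(1-\alpha)p(z)\bigr)$ and compare coefficients of $z^2$ and $z^3$:
\begin{align*}
a_2=(1-\alpha)c_1,\qquad 2a_3=(1-\alpha)c_2+a_2(1-\alpha)c_1 .
\end{align*}
Hence $a_3=\tfrac{1-\alpha}{2}\bigl(c_2+(1-\alpha)c_1^2\bigr)$, and therefore
\begin{align*}
a_3-\mu a_2^2=\frac{1-\alpha}{2}\Bigl(c_2-\nu c_1^2\Bigr),\qquad \nu=(1-\alpha)(2\mu-1).
\end{align*}

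\textbf{Step 2: the Carath\'eodory estimate.} We use the classical bound
\begin{align*}
|c_2-\nu c_1^2|\le 2\max\{1,|2\nu-1|\},\qquad \nu\in\mathbb{C}.
\end{align*}
To prove it, pass to the Schwarz function $\omega=(p-1)/(p+1)=b_1z+b_2z^2+\cdots$. Then $c_1=2b_1$ and $c_2=2b_2+2b_1^2$, so
\begin{align*}
c_2-\nu c_1^2=2\bigl(b_2-(2\nu-1)b_1^2\bigr).
\end{align*}
The standard Schwarz-function inequality $|b_2|\le 1-|b_1|^2$ gives
\begin{align*}
|b_2-tb_1^2|\le 1-|b_1|^2+|t||b_1|^2\le \max\{1,|t|\}.
\end{align*}
Taking $t=2\nu-1$ and combining with Step 1 yields
\begin{align*}
|a_3-\mu a_2^2|\le (1-\alpha)\max\{1,|2\nu-1|\}=(1-\alpha)\max\{1,|2(1-\alpha)(2\mu-1)-1|\},
\end{align*}
which is the claimed bound.

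\textbf{Step 3: sharpness.} We give one extremal for each branch of the maximum.
\begin{itemize}
\item For the branch $|2\nu-1|$, take $\omega(z)=z$, so $p(z)=(1+z)/(1-z)$. Then $c_1=c_2=2$ and $c_2-\nu c_1^2=2(1-2\nu)$, so equality holds. The corresponding function is $f(z)=z(1-z)^{-2(1-\alpha)}$.
\item For the branch $1$, take $\omega(z)=z^2$, so $p(z)=(1+z^2)/(1-z^2)$. Then $c_1=0$ and $c_2=2$, giving $|a_3-\mu a_2^2|=1-\alpha$. The corresponding function is $f(z)=z(1-z^2)^{-(1-\alpha)}$.
\end{itemize}

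The main obstacle is the sharp two-case bound in Step 2. The reduction to the Schwarz function, together with the inequality $|b_2|\le 1-|b_1|^2$, is what makes it work uniformly for all complex $\mu$. Everything else is bookkeeping with coefficients.
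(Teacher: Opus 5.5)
Your proof is correct and is essentially the one-variable specialization of the paper's proof of Theorem~\ref{Th-1} (the paper itself only quotes Theorem A from Keogh--Merkes). Your Schwarz-function bound $|b_2|\le 1-|b_1|^2$ is precisely Lemma~\ref{Lem2} rewritten through $c_1=2b_1$, $c_2=2b_2+2b_1^2$, and the same triangle inequality and maximization over $|c_1|\in[0,2]$ then follow.

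Your sharpness step is more complete than the paper's. The paper uses only the Koebe-type extremal and so attains only the branch $|2(1-\alpha)(2\mu-1)-1|$, whereas your second function $z(1-z^2)^{-(1-\alpha)}$ also gives the value $1-\alpha$ when the maximum equals $1$.
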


Let $\mathcal{K}(\mathbb{U})$ denote the familiar subclass of $\mathcal{S}(\mathbb{U})$ consisting of functions which are convex. Then $f\in\mathcal{K}(\mathbb{U})$ if and only if 
\begin{align*}
\displaystyle \Re\bigg\{ 1+\frac{z f''(z)}{f'(z)} \bigg\} > 0,\quad z \in \mathbb{U}.
\end{align*}

For normalized convex functions on $\mathbb{U}$, Keogh and Merkes \cite{Keogh-Merkes-1969} obtained the following result:
\begin{theoB} Let $f\in\mathcal{K}(\mathbb{U})$. Then for $\mu\in\mathbb{C}$, we have the following sharp inequality
\begin{align*}
\displaystyle
	|a_3-\mu a_2^2|\leq \frac{1}{3}\max\{1,3|1-\mu|\}.
\end{align*}	
\end{theoB}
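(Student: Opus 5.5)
The plan is to reduce Theorem B to a Fekete--Szeg\H{o} type estimate for Carath\'eodory functions. If $f\in\mathcal{K}(\mathbb{U})$ has the expansion \eqref{eq1}, set
\begin{align*}
p(z)=1+\frac{zf''(z)}{f'(z)}=1+c_1z+c_2z^2+\cdots .
\end{align*}
Then $\Re p>0$ and $p(0)=1$, so $p$ belongs to the Carath\'eodory class. Comparing coefficients in $f'(z)+zf''(z)=p(z)f'(z)$ gives
\begin{align*}
2a_2=c_1,\qquad 6a_3=c_2+2a_2c_1 .
\end{align*}
Hence $a_2=c_1/2$ and $a_3=(c_2+c_1^2)/6$. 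It follows that
\begin{align*}
a_3-\mu a_2^2=\frac{1}{6}\Big(c_2-\nu c_1^2\Big),\qquad \nu=\frac{3\mu}{2}-1 .
\end{align*}

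Next I invoke the standard lemma for Carath\'eodory functions: for $\nu\in\mathbb{C}$,
\begin{align*}
|c_2-\nu c_1^2|\le 2\max\{1,|2\nu-1|\}.
\end{align*}
To prove it, write $c_2=\tfrac12c_1^2+\tfrac12\big(4-|c_1|^2\big)\varepsilon$ with $|\varepsilon|\le1$. This representation follows from the Schur parametrization $p=(1+\omega)/(1-\omega)$, where $\omega(z)=b_1z+b_2z^2+\cdots$ is a Schwarz function and $|b_2|\le 1-|b_1|^2$. With $t=|c_1|\in[0,2]$ this yields
\begin{align*}
|c_2-\nu c_1^2|\le \tfrac12|1-2\nu|\,t^2+\tfrac12(4-t^2)=2+\tfrac12 t^2\big(|1-2\nu|-1\big).
\end{align*}
The right-hand side is linear in $t^2$, so its maximum is $2$ at $t=0$ or $2|1-2\nu|$ at $t=2$. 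Substituting $2\nu-1=3\mu-3$ gives
\begin{align*}
|a_3-\mu a_2^2|\le\frac{1}{3}\max\{1,3|1-\mu|\},
\end{align*}
which is the inequality in Theorem B.

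For sharpness I consider the two regimes separately. When $3|1-\mu|\ge1$, take $p(z)=(1+z)/(1-z)$, which corresponds to $f(z)=z/(1-z)$ with $a_2=a_3=1$; then $|a_3-\mu a_2^2|=|1-\mu|$. When $3|1-\mu|<1$, take $p(z)=(1+z^2)/(1-z^2)$, which corresponds to $f(z)=\tfrac12\log\frac{1+z}{1-z}$ with $a_2=0$ and $a_3=1/3$; this gives the value $1/3$.

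The only nontrivial step is the coefficient lemma, specifically justifying the parametrization of $c_2$ through $\varepsilon$. I would derive it directly from the Schwarz-function inequality $|b_2|\le1-|b_1|^2$, using $c_1=2b_1$ and $c_2=2(b_2+b_1^2)$.
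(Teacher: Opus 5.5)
Your proof is correct and follows essentially the same route as the paper: the paper quotes Theorem B from Keogh--Merkes, but its proof of the extension Theorem~\ref{Th-3} works in one variable with the reciprocal $g=1/p$ of your $p=1+zf''/f'$, and since $g_1=-c_1$ and $g_2-\tfrac12 g_1^2=-(c_2-\tfrac12 c_1^2)$, its use of Lemma~\ref{Lem2} (which you derive from $|b_2|\le 1-|b_1|^2$ instead of citing) and its maximization over $|c_1|\in[0,2]$ coincide with yours. Your sharpness discussion is more complete than the paper's, which only exhibits $\tfrac12\log\frac{1+z}{1-z}$ (value $\tfrac13$) and does not treat the regime $3|1-\mu|>1$; your example $z/(1-z)$ covers that regime.
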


The following result, first obtained by Hummel \cite{Hummel-1957} and later by Trimble \cite{Trimble-1975}, has remarkable implications in the theory of univalent functions.
\begin{theoC} Let $f\in\mathcal{K}(\mathbb{U})$. Then
\begin{align*}
|a_3-a_2^2|\leq \frac{1}{3}\left(1-|a_2|^2\right).
\end{align*}	
\end{theoC}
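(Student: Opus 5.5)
The plan is to reduce Theorem C to a coefficient inequality for Carathéodory functions, which can then be handled by Schwarz--Pick. Write $f\in\mathcal{K}(\mathbb{U})$ as $f(z)=z+a_2z^2+a_3z^3+\cdots$. Set
\[
p(z)=1+\frac{zf''(z)}{f'(z)}=1+c_1z+c_2z^2+\cdots,
\]
so that $\Re p>0$ and $p(0)=1$. Comparing coefficients in $f'(z)\,(p(z)-1)=zf''(z)$ gives
\[
c_1=2a_2,\qquad c_2+2a_2c_1=6a_3 .
\]
Hence $a_2=c_1/2$ and $a_3=(c_2+c_1^2)/6$, so that
\[
a_3-a_2^2=\frac{c_2+c_1^2}{6}-\frac{c_1^2}{4}=\frac{2c_2-c_1^2}{12}.
\]
The claim therefore becomes
\[
|2c_2-c_1^2|\le 4-|c_1|^2 .
\]

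To prove this, introduce the Schwarz function $\omega(z)=\dfrac{p(z)-1}{p(z)+1}=b_1z+b_2z^2+\cdots$, which satisfies $\omega(0)=0$ and $|\omega|<1$. From $p=(1+\omega)/(1-\omega)=1+2\omega+2\omega^2+\cdots$ one reads off
\[
c_1=2b_1,\qquad c_2=2b_2+2b_1^2 .
\]
Then
\[
2c_2-c_1^2=4b_2+4b_1^2-4b_1^2=4b_2 .
\]
So the inequality is equivalent to $|b_2|\le 1-|b_1|^2$.

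This last bound is the classical coefficient form of Schwarz--Pick. Apply Schwarz's lemma to get $\psi(z)=\omega(z)/z=b_1+b_2z+\cdots$, a self-map of the closed disk. If $|b_1|=1$, then $\psi$ is a unimodular constant, so $b_2=0$ and the bound holds trivially. Otherwise $\psi$ maps into $\mathbb{U}$, and Schwarz--Pick at $0$ gives $|\psi'(0)|\le 1-|\psi(0)|^2$, i.e. $|b_2|\le 1-|b_1|^2$.

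Substituting back,
\[
|a_3-a_2^2|=\frac{4|b_2|}{12}\le \frac{1-|b_1|^2}{3}=\frac{1}{3}\bigl(1-|a_2|^2\bigr),
\]
since $b_1=c_1/2=a_2$.

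The only step needing care is the coefficient bookkeeping, in particular the identity $2c_2-c_1^2=4b_2$. There the $b_1^2$ terms must cancel exactly, and this cancellation is what makes the functional $a_3-a_2^2$ (i.e. $\mu=1$) special. No separate extremal analysis is needed: the inequality is attained precisely when $\psi$ is a disk automorphism, which shows the estimate cannot be improved.
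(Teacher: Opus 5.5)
Your argument is correct and is essentially the paper's route. The paper obtains Theorem C as the case $X=\mathbb{C}$ of Corollary 3.1, whose proof writes $a_2,a_3$ through the coefficients of a Carath\'eodory function (the reciprocal of $1+zf''/f'$) and then applies Lemma 2, $|p_2-\tfrac12 p_1^2|\le 2-\tfrac12|p_1|^2$; that lemma is exactly your inequality $|2c_2-c_1^2|\le 4-|c_1|^2$, and the only difference is that you prove it yourself via the Schwarz function and Schwarz--Pick instead of citing it (your closing sharpness remark should also allow the unimodular-constant case $|b_1|=1$, where both sides vanish).
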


Let $X$ be a complex Banach space with norm $||.||$. Let $\mathbb{B}_r=\{z\in X:\|z\|<r\}$ denote the ball of radius $r$. If $r=1$, we will simply use $\mathbb{B}$ and if $X=\mathbb{C}$, then $\mathbb{B}=\mathbb{U}$. Let $\partial \mathbb{B}$ denote the boundary of $\mathbb{B}$, $\overline{\mathbb{B}}$ its closure, and $\partial_0 \mathbb{U}^n$ the characteristic boundary (i.e., the boundary on which the maximum modulus of a holomorphic function is attained) of the unit polydisk $\mathbb{U}^n$.\vspace{1.2mm}

Now let $X$ and $Y$ be two complex Banach spaces with respect to the norms $||.||_1$ and $||.||_2$ respectively. For simplicity, we denote both norms by $||.||$, when there is no possibility of confusion. Let $L(X, Y)$ denote the set of continuous linear operators from a complex Banach space $X$ into a complex Banach space $Y$ with the standard operator norm 
\begin{align*} 
	||A||=\sup\{||A(z)||: ||z||=1,\quad A\in L(X,Y)\}.
\end{align*} 

Let $I$ be the identity operator in $L(X, X)$. For each $z \in X \setminus \{0\}$, define the set  
\begin{align*}
	\displaystyle 
	T(z) = \{ l_z \in L(X, \mathbb{C}) : l_z(z) = \Vert{}z\Vert{}, \quad \Vert{}l_z\Vert{} = 1 \}.
\end{align*}

This set is guaranteed to be nonempty by the Hahn-Banach theorem. 
Let $\Omega$ be a domain in $X$ and let $f:\Omega \rightarrow Y$ be a mapping. Then $f$ is called holomorphic on $\Omega$ if for any $z\in\Omega$ there is a mapping $Df(z)\in L(X,Y)$, called the Fr\'echet derivative of $f$ at $z$ such that 
\begin{align*} \lim_{h\to 0} \frac{||f(z+h)-f(z)-Df(z)h||}{||h||}=0.\end{align*} 

Let $\mathcal{H}(\Omega, \Omega')$ represent the set of all holomorphic mappings from a domain $\Omega \subset X$ into another domain $\Omega' \subset Y$. We simply write $\mathcal{H}(\Omega, X) = \mathcal{H}(\Omega)$. It is well known that if $f \in \mathcal{H}(\Omega,Y)$, then for all $w$ in some neighborhood of $z \in \Omega$, the function can be expressed as the power series expansion:
\begin{align*}
	\displaystyle
	f(w) = \sum_{n=0}^{\infty} \frac{1}{n!} D^n f(z)((w - z)^n),
\end{align*}
where $D^n f(z)$ is a bounded symmetric $n$-linear mapping from $\prod_{j=1}^n X$ into $Y$, called the $n$-th Fr\'echet derivative of $f$ at $z$. Moreover, the term $D^n f(z)((w - z)^n)$ is shorthand for applying the $n$-linear mapping to $n$ copies of the vector $(w - z)$:
\begin{align*}
	\displaystyle
	D^n f(z)((w - z)^n) = D^n f(z)(\underbrace{w - z, w - z, \dots, w - z}_{n \text{ times}})
\end{align*}
where $D^0f(z)=f(z)$.
A mapping $f \in \mathcal{H}(\mathbb{B})$ is said to be normalized if $f(0) = 0$ and $Df(0) = I$, where $I \in L(X,X)$ is the identity operator. It is called biholomorphic if $f$ is bijective with a holomorphic inverse $f^{-1}$ on $f(\mathbb{B})$, and locally biholomorphic if $Df(z)$ has a bounded inverse for every $z \in \mathbb{B}$. The family of all normalized biholomorphic mappings on $\mathbb{B}$ is denoted by $\mathcal{S}(\mathbb{B})$. A biholomorphic mapping on $\mathbb B$ is said to be starlike
if $f(\mathbb B)$ is a starlike domain with respect to the origin.\vspace{1.2mm}


It is known that if $X=\mathbb{C}^{n}$ with respect to maximum norm $\|.\|_{\infty}$ and $l_z\in T(z)$, then
\begin{align}\label{eq3}
\displaystyle
	l_z(w)=
	\sum_{\{j:\|z\|_{\infty}=|z_j|\}}
	t_j\,
	\frac{w_j\overline{z_j}}
	{\|z\|_{\infty}},
	\qquad \text{for }p=\infty,
\end{align}
where each $t_j\geq0$ and $\sum\limits_{\{j:\|z\|_{\infty}=|z_j|\}} t_j=1.$

The following family plays a key role in our discussion:
\begin{align*}\displaystyle
	\mathcal{M}=\{f:\mathbb{B}\to X: f\in\mathcal{H}(\mathbb{B}),\; f(0)=0,\;Df(0)=I,\; \Re\{l_z(f(z))\}> 0,\; z\in\mathbb{B}\backslash\{0\},\; 
\end{align*}

In the case of the maximum norm $\Vert{}.\Vert{}_\infty $ on $\mathbb{C}^n$, the class $\mathcal{M}$ consists of all normalized holomorphic mappings $w:\mathbb{U}^n \to \mathbb{C}^n$ satisfying,
\begin{align*}
\displaystyle
\Re\left\lbrace \frac{w_j(z)}{z_j} \right\rbrace >0,
\end{align*}
whenever $\Vert{}z\Vert{}_\infty = \vert{}z_j\vert{} > 0$ for $1 \le j \le n$.
\FloatBarrier
\begin{table}[H]
	\centering
	\caption{Summary of notation used in the Banach space setting.}
	\label{tab:notation}
	\begin{tabular}{ll}
		\toprule
		Symbol & Meaning \\
		\midrule
		$X$, $\|\cdot\|$ & Complex Banach space and its norm \\
		$\mathbb{B}$, $\mathbb{B}_r$ & Unit ball, ball of radius $r$ in $X$ \\
		$T(z)$ & Set of norming functionals at $z\in X\setminus\{0\}$ \\
		$\mathcal{H}(\Omega,\Omega')$ & Holomorphic mappings $\Omega\to\Omega'$ \\
		$D^nf(z)$ & $n$-th Fr\'echet derivative of $f$ at $z$ \\
		$\mathbb{B}(p)$ & Unit ball of $\mathbb{C}^n$ w.r.t.\ the $p$-norm \\
		$\mathbb{U}^n$ & Unit polydisk ($=\mathbb{B}(\infty)$) \\
		\bottomrule
	\end{tabular}
\end{table}

In several complex variables, classical results often fail to hold in their original form. Most notably, Poincar\'e disproved the higher-dimensional analogue of the Riemann mapping theorem, while Cartan constructed counterexamples to the Bieberbach conjecture on the unit ball. As a result, extending univalent function theory to higher dimensions requires introducing appropriate subclasses or additional structural restrictions.
Recently bounds for coefficients of various subclasses of normalized biholomorphic
mappings in higher dimensions have been obtained by Bracci \cite{Bracci-2015}, Bracci et al. \cite{Bracci-2016},
Bracci and Roth \cite{Bracci-2018}, Graham et al. \cite{Graham-2002}, \cite{Graham-2016}, \cite{Graham-2017}, Hamada and Honda \cite{Hamada-Honda-2008}, Hamada et al. \cite{Hamada-Honda-Kohr-2006}, Hamada and Kohr \cite{Hamada-Kohr-2020} and Xu et al. \cite{Xu-Liu-Liu-2018}.

The discussion above naturally leads to the following summary, which places the present work in the context of the existing literature.
\begin{table}[H]
	\centering
	\begin{threeparttable}
		\caption{Evolution of coefficient problems from the classical one-variable setting to higher-dimensional holomorphic mappings.}
		\label{tab:comparison}
		
		\renewcommand{\arraystretch}{1.45}
		
		\rowcolors{2}{AltRow}{white}
		
		\begin{tabular}{
				>{\raggedright\arraybackslash}p{2.5cm}
				>{\raggedright\arraybackslash}p{3.4cm}
				>{\raggedright\arraybackslash}p{3.cm}
				>{\raggedright\arraybackslash}p{4.7cm}
			}
			
			\toprule
			
			\rowcolor{HeaderBlue}
			
			\color{white}\bfseries Problem
			&
			\color{white}\bfseries Classical Setting
			&
			\color{white}\bfseries Higher-Dimensional Status
			&
			\color{white}\bfseries Contribution of the Present Paper
			
			\\
			
			\midrule
			
			\textbf{Bieberbach theory}
			&
			Completely resolved by de Branges
			&
			Fails in general for biholomorphic mappings
			&
			Provides the motivating framework
			
			\\
			
			\textbf{Fekete--Szeg\"o problem}
			&
			Sharp inequalities known for numerous subclasses
			&
			Only partially developed
			&
			Sharp inequalities for starlike and convex mappings on Banach spaces
			
			\\
			
			\textbf{Zalcman conjecture}
			&
			Open for general univalent functions ($n\ge7$)
			&
			Very limited literature
			&
			Sharp estimates for the case $n=2$ in higher dimensions
			
			\\
			
			\textbf{Coefficient functionals}
			&
			Extensively studied in one variable
			&
			Relatively unexplored
			&
			Extends several classical coefficient estimates to biholomorphic mappings
			
			\\
			
			\bottomrule
			
		\end{tabular}

	\end{threeparttable}
\end{table}
\tikzset{
	box/.style={
		draw=myblue,
		rounded corners=4pt,
		very thick,
		fill=LightBlue,
		text width=14.3cm,
		inner sep=6pt,
		drop shadow
	},
	mainbox/.style={
		draw=myblue,
		rounded corners=4pt,
		very thick,
		fill=lightgreen,
		text width=14.3cm,
		inner sep=7pt,
		drop shadow
	}
}
\begin{center}
	\begin{tikzpicture}
		\node[box]{
			\textbf{\large Motivation and Research Gap}
		
		\vspace{1mm}
		
		Although the classical Fekete--Szeg\"o problem and the generalized	Zalcman conjecture have been extensively investigated for normalized
		univalent functions in one complex variable, their higher-dimensional	counterparts remain far less understood.
		In particular, sharp coefficient estimates for normalized starlike	mappings of order $\alpha$ and normalized convex mappings on the unit
		ball of a complex Banach space and the unit polydisk are still	relatively scarce, leaving several natural extremal problems open.
		};
	\end{tikzpicture}
\end{center}

\begin{center}
	\begin{tikzpicture}
		\node[mainbox]{
			\textbf{\large Main Contributions of the Present Paper}
		
		\vspace{1mm}
		
		1. We establish sharp Fekete--Szeg\"o inequalities for normalized	starlike mappings of order $\alpha$ and convex mappings	on the unit ball of a complex Banach space.\\
		2. We obtain sharp estimates for the generalized Zalcman	functional in the case $n=2$ for mappings defined on both the unit ball and the unit polydisk.\\
		3. Our results extend the classical one-variable
			Theorems~A--C to higher-dimensional holomorphic mappings. \\
		4. These estimates provide a partial confirmation of the generalized	Zalcman conjecture in higher dimensions.		
		
		};
	\end{tikzpicture}	
\end{center}
\section{{\bf Fekete-Szeg\"{o} inequalities for starlike mappings of order $\alpha$ on complex Banach space}}\label{Sec-2}
In this section, we first recall the definition of starlikeness of order $\alpha$ in complex Banach space.
\begin{defi}\emph{\cite[Definition 6.2.12]{Graham-Kohr}}
	Let $f:\mathbb{B}\to X$ be a locally biholomorphic mapping such that $f(0)=0$ and let $\alpha\in [0,1)$. We say that $f$ is starlike of order $\alpha$ if 
	\begin{align}\label{SL1}
	\displaystyle
		\Re\left\lbrace \frac{||z||}{l_z([Df(z)]^{-1}f(z))}\right\rbrace>\alpha.\quad z\in\mathbb{B}\backslash\{0\},\quad l_z\in T(z).
	\end{align}
\end{defi}

The definition of starlikeness of order $\alpha$ extends naturally to complex Banach spaces; see \cite[Definition 1.2]{Hamada-2001}.

It  is easy to see that in one variable the condition \eqref{SL1} reduces to $\displaystyle \Re\left\lbrace \frac{zf'(z)}{f(z)}\right\rbrace>\alpha$, that is the usual condition for starlikeness of order $\alpha$ in the unit disk $\mathbb{U}$.

Let $\mathbb{B}$ be the unit ball of a complex Banach space $X$. We denote by $\mathcal{S}_\alpha^*(\mathbb{B})$ the subclass of $\mathcal{S}(\mathbb{B})$ consisting of normalized starlike mappings of order $\alpha$ from $\mathbb{B}$ into $X$.
 
 The theory of starlike mappings in higher dimensions has been extensively developed. For more information, see the recent monographs by Gong  \cite{Gong} and Kohr--Liczberski  \cite{Kohr- Liczberski-1998}. Foundational papers on the subject include \cite{RCS_PJM_1991,Brown-Tsao-1986,Chu,SSQ_CHMS_1990}, \cite{Kub-Por}, \cite{Su2}-\cite{Xu-He-Xu}, \cite{Xu-Liu-2010}.
 \vspace{1.2mm}
 
 The following theorem provides a sharp bound for the Zalcman functional for the class of starlike mappings of order $\alpha$ in higher dimensions.
 
 \begin{theo}\label{Th-1} Let $f\in\mathcal{H}(\mathbb{B},\mathbb{C})$ satisfy $f(0)=1$,
 	and suppose that the mapping $F:\mathbb{B}\to X$ defined by 	$F(z)=f(z)z$ belongs to $\mathcal{S}_\alpha^*(\mathbb{B})$.
 	Then, for every $\mu\in\mathbb{C}$ and 
 	$z\in\mathbb{B}\setminus\{0\}$,
 \begin{align}\label{Th-A}
 \displaystyle
 		\left|\frac{l_{z}(D^3F(0)(z^3))}{3!\|z\|^3}-\mu\left(\frac{l_z(D^2F(0)(z^2))}{2!\|z\|^2}\right)^2\right|\leq 
 		(1-\alpha)\max\{1,|2(1-\alpha)(2\mu-1)-1|\},
 	\end{align}
 for $z\in\mathbb{B}\backslash\{0\}$. The inequality \eqref{Th-A} is sharp.
 \end{theo}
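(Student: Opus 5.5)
Fix $z\in\mathbb{B}\setminus\{0\}$ and set $z_0=z/\|z\|$. The plan is to reduce \eqref{Th-A} to Theorem A along the complex line through $z_0$.

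Define on the unit disk
\[
p(\zeta)=\zeta f(\zeta z_0),\qquad \zeta\in\mathbb{U},
\]
so that $p(0)=0$ and $p'(0)=f(0)=1$.

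\textbf{Step 1: $p$ is starlike of order $\alpha$.} Since $F(w)=f(w)w$, we have
\[
DF(w)u=f(w)u+(Df(w)u)\,w .
\]
Solving $DF(w)v=F(w)$ with the ansatz $v=cw$ gives
\[
[DF(w)]^{-1}F(w)=\frac{f(w)}{f(w)+Df(w)w}\,w .
\]
Put $w=\zeta z_0$. Since $l_w(w)=\|w\|=|\zeta|$, condition \eqref{SL1} becomes
\[
\Re\left\{\frac{f(w)+Df(w)w}{f(w)}\right\}>\alpha .
\]
On the other hand,
\[
\frac{\zeta p'(\zeta)}{p(\zeta)}=1+\frac{\zeta\,\frac{d}{d\zeta}f(\zeta z_0)}{f(\zeta z_0)}=\frac{f(w)+Df(w)w}{f(w)} .
\]
Hence $\Re\{\zeta p'(\zeta)/p(\zeta)\}>\alpha$ on $\mathbb{U}\setminus\{0\}$, and also at $0$ where the quotient equals $1$. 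Local biholomorphy of $F$ forces $f\neq0$, so $p$ is locally univalent. Therefore $p\in\mathcal{S}^*_\alpha(\mathbb{U})$.

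\textbf{Step 2: identify the coefficients.} Write $p(\zeta)=\zeta+a_2\zeta^2+a_3\zeta^3+\cdots$. Expanding $f(\zeta z_0)=1+\zeta\,Df(0)z_0+\frac{\zeta^2}{2}D^2f(0)(z_0^2)+\cdots$ gives
\[
a_2=Df(0)z_0,\qquad a_3=\tfrac12 D^2f(0)(z_0^2).
\]
Differentiating $F(w)=f(w)w$ at $0$ gives
\[
\frac{D^2F(0)(z^2)}{2!}=(Df(0)z)\,z,\qquad \frac{D^3F(0)(z^3)}{3!}=\tfrac12 D^2f(0)(z^2)\,z .
\]
Applying $l_z$ (with $l_z(z)=\|z\|$) and using homogeneity,
\[
\frac{l_z(D^2F(0)(z^2))}{2!\|z\|^2}=a_2,\qquad \frac{l_z(D^3F(0)(z^3))}{3!\|z\|^3}=a_3 .
\]
So the left-hand side of \eqref{Th-A} equals $|a_3-\mu a_2^2|$, and Theorem A gives the bound.

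\textbf{Sharpness.} Take the extremal functions $k$ of Theorem A and a unit vector $u$ with some $l_u\in T(u)$. Define $F(z)=\dfrac{k(l_u(z))}{l_u(z)}\,z$, interpreted as $z$ when $l_u(z)=0$. I expect the main obstacle to be checking that this $F$ lies in $\mathcal{S}^*_\alpha(\mathbb{B})$. By Step 1, with $t=l_u(z)$ and $f(z)=k(t)/t$, the quantity in \eqref{SL1} equals
\[
\frac{tk'(t)}{k(t)}\qquad\text{at } t=l_u(z)\in\mathbb{U},
\]
whose real part exceeds $\alpha$ (at $t=0$ it equals $1$). What remains is univalence of $F$, which is standard for this construction: invert along rays, since $F(z)$ is a scalar multiple of $z$.

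Finally take $z=ru$ and let $r\to0^+$, or use the homogeneity already noted. Then the functional reduces to $|a_3(k)-\mu a_2(k)^2|$, and equality is attained.
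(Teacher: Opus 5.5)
Your proof is correct and is essentially the paper's argument: the paper's auxiliary function $g(\zeta)=1+Df(\zeta z_0)\zeta z_0/f(\zeta z_0)$ is exactly your $\zeta p'(\zeta)/p(\zeta)$, and where you quote Theorem A the paper re-derives it from Lemmas \ref{Lem1} and \ref{Lem2}. One small tightening: deduce $f(w)\neq0$ for $w\neq0$, and the univalence of $p$, from the injectivity of $F$ (note $F(\zeta z_0)=p(\zeta)z_0$), because local biholomorphy alone does not give $f\neq0$ when $X=\mathbb{C}$.

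Your sharpness argument transplants every extremal $k$ of Theorem A via $F(z)=\frac{k(l_u(z))}{l_u(z)}z$, and this is more complete than the paper's. The paper uses the single example $z/(1-l_u(z))^{2(1-\alpha)}$, which attains the bound only when $|2(1-\alpha)(2\mu-1)-1|\geq1$. In the other regime one needs, e.g., $k(t)=t/(1-t^2)^{1-\alpha}$, and your construction covers it.
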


 \begin{figure}[H]
 	\centering
 	\begin{tikzpicture}
 		\begin{axis}[
 			width=11.5cm,
 			height=8.5cm,
 			view={0}{90},
 			xlabel={$\Re\mu$},
 			ylabel={$\Im\mu$},
 			xmin=-0.5, xmax=2,
 			ymin=-1.25, ymax=1.25,
 			colorbar,
 			colorbar style={
 				ylabel={$B(\alpha,\mu)$}
 			},
 			title={$B(\alpha,\mu)$ for $\alpha=\tfrac12$},
 			samples=61,
 			samples y=61,
 			domain=-0.5:2,
 			y domain=-1.25:1.25,
 			]
 			\addplot3[
 			surf,
 			shader=interp,
 			]
 			{
 				0.5*max(
 				1,
 				sqrt(
 				(2*0.5*(2*x-1)-1)^2
 				+ (4*0.5*y)^2
 				)
 				)
 			};
 		\end{axis}
 	\end{tikzpicture}
 	\caption{
 		The sharp bound
 		\[
 		B(\alpha,\mu)
 		=(1-\alpha)\max\left\{
 		1,\left|2(1-\alpha)(2\mu-1)-1\right|
 		\right\}
 		\]
 		as a function of the complex parameter
 		$\mu=\Re\mu+i\Im\mu$, shown for $\alpha=\frac12$.
 		The plot displays the full dependence on both the real and imaginary
 		parts of $\mu$.
 	}
 	\label{fig:complex-mu-bound}
 \end{figure}
 \begin{figure}[H]
 	\centering
 	\begin{tikzpicture}[scale=2.4]
 		\draw[->] (-0.2,0) -- (2.2,0)
 		node[right] {$\Re\mu$};
 		\draw[->] (0,-1.1) -- (0,1.1)
 		node[above] {$\Im\mu$};
 		
 		\fill[blue!12] (1,0) circle (0.5);
 		\draw[blue,very thick] (1,0) circle (0.5);
 		
 		\fill (1,0) circle (0.025);
 		\node[below] at (1,0) {$1$};
 		
 		\node[blue!70!black,align=center] at (1,0.2)
 		{$B(\tfrac12,\mu)=\tfrac12$};
 		
 		\node[align=center] at (1.72,0.72)
 		{$B(\tfrac12,\mu)=|\mu-1|$};
 		
 		\draw[dashed] (1,0) -- (1.5,0);
 		\node[below] at (1.25,0) {$\tfrac12$};
 		
 		\foreach \x in {0.5,1,1.5,2}
 		\draw (\x,0.025)--(\x,-0.025);
 	\end{tikzpicture}
 	\caption{
 		Geometry of the sharp bound for $\alpha=\frac12$.
 		The bound is constant and equal to $\frac12$ in the disk
 		$|\mu-1|\le\frac12$, and equals $|\mu-1|$ outside that disk.
 	}
 	\label{fig:complex-mu-levelset}
 \end{figure}
 
 \begin{theo}\label{Th-2}Let $f\in\mathcal{H}(\mathbb{U}^n,\mathbb{C})$ with $f(0)=1$ and $F(z)=zf(z)\in\mathcal{S}^*_{\alpha}(\mathbb{U}^n)$ and let $\mu$ be any complex number. Then
 	\begin{align*}
 	\displaystyle
 		\left\|\frac{D^3F(0)(z^3)}{3!}-\mu\frac{D^2F(0)}{2!}\left(z,\frac{D^2F(0)(z^2)}{2!}\right)\right\|_{\infty}\leq 
 		(1-\alpha)\max\{1,|2(1-\alpha)(2\mu-1)-1|\}||z||_{\infty},
 	\end{align*}
 	for all $z\in\mathbb{U}^n$. The inequality is sharp.
 \end{theo}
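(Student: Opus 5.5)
The plan is to reduce Theorem~\ref{Th-2} to Theorem~\ref{Th-1} by exploiting the special form $F(z)=zf(z)$. The point is that the vector-valued functional on the left collapses to a scalar multiple of $z$.

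\textbf{Step 1: collapse the vector expression to a scalar one.} Write $P_1(z)=Df(0)z$ and $P_2(z)=\frac{1}{2}D^2f(0)(z^2)$. Since $f(0)=1$, we have
\begin{align*}
F(z)=z+P_1(z)z+P_2(z)z+\cdots,
\end{align*}
so that
\begin{align*}
\frac{D^2F(0)(z^2)}{2!}=P_1(z)z,\qquad \frac{D^3F(0)(z^3)}{3!}=P_2(z)z .
\end{align*}
Polarizing the quadratic part gives
\begin{align*}
\frac{D^2F(0)(z,w)}{2!}=\frac12\bigl(P_1(z)w+P_1(w)z\bigr).
\end{align*}
Substituting $w=P_1(z)z$ and using linearity of $P_1$ yields $P_1(z)^2z$. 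Hence the vector inside the norm in Theorem~\ref{Th-2} equals
\begin{align*}
\bigl(P_2(z)-\mu P_1(z)^2\bigr)z,
\end{align*}
and its $\|\cdot\|_\infty$-norm is $|P_2(z)-\mu P_1(z)^2|\,\|z\|_\infty$.

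\textbf{Step 2: invoke Theorem~\ref{Th-1} on $X=(\mathbb{C}^n,\|\cdot\|_\infty)$.} Fix $z\neq0$ and any $l_z\in T(z)$, as described in \eqref{eq3}. Since $l_z(z)=\|z\|_\infty$, we get
\begin{align*}
\frac{l_z(D^3F(0)(z^3))}{3!\|z\|^3_\infty}=\frac{P_2(z)}{\|z\|_\infty^2},\qquad \frac{l_z(D^2F(0)(z^2))}{2!\|z\|_\infty^2}=\frac{P_1(z)}{\|z\|_\infty}.
\end{align*}
Theorem~\ref{Th-1} then gives $|P_2(z)-\mu P_1(z)^2|\le B(\alpha,\mu)\|z\|_\infty^2\le B(\alpha,\mu)$ for $z\in\mathbb{U}^n$. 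Multiplying by $\|z\|_\infty$ gives the inequality of Theorem~\ref{Th-2}; the case $z=0$ is trivial.

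\textbf{Step 3: sharpness, the main point needing care.} Let $k$ be the one-variable extremal function for Theorem~A, i.e.\ $k\in\mathcal S^*_\alpha(\mathbb U)$ with $|a_3-\mu a_2^2|=B(\alpha,\mu)$. Depending on which term realizes the maximum, this is either $\zeta(1-\zeta)^{-2(1-\alpha)}$ or $\zeta(1-\zeta^2)^{-(1-\alpha)}$, possibly rotated. Set $g(\zeta)=k(\zeta)/\zeta$ and define
\begin{align*}
F(z)=z\,g(z_1).
\end{align*}
We must verify $F\in\mathcal S^*_\alpha(\mathbb U^n)$. Compute $[DF(z)]^{-1}F(z)$ explicitly: $DF(z)=g(z_1)I+z\,g'(z_1)e_1^{T}$ is a rank-one perturbation of $g(z_1)I$, and the Sherman--Morrison formula gives
\begin{align*}
[DF(z)]^{-1}F(z)=\frac{z}{1+z_1g'(z_1)/g(z_1)}=\frac{k(z_1)}{z_1k'(z_1)}\,z .
\end{align*}
Therefore, for every $l_z\in T(z)$,
\begin{align*}
\frac{\|z\|_\infty}{l_z([DF(z)]^{-1}F(z))}=\frac{z_1k'(z_1)}{k(z_1)},
\end{align*}
whose real part exceeds $\alpha$ because $k\in\mathcal S^*_\alpha(\mathbb U)$. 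Biholomorphy of $F$ follows from starlikeness, as in the general theory cited in the paper.

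Finally, take $z=(r,0,\dots,0)$. Then $P_1(z)=a_2r$ and $P_2(z)=a_3r^2$, so the left-hand side equals $|a_3-\mu a_2^2|\,r^3$. Here $\|z\|_\infty=r$, which yields equality in the limit $r\to1^-$ (equivalently, after dividing both sides by $\|z\|_\infty^3$). I expect this last verification — the explicit inverse and the starlikeness check on the polydisk — to be the only nonroutine step.
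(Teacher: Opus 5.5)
Your proof is correct, but it takes a different route from the paper. The paper does not reduce to Theorem~\ref{Th-1}. It redoes the Carath\'eodory argument directly on $\mathbb{U}^n$:
it fixes an index $k$ with $|z_k|=\|z\|_\infty$ and uses the explicit form \eqref{eq3} of $l_z$ to build $h_k\in\mathcal{P}$ from the $k$-th coordinate of $[DF(z)]^{-1}F(z)$. It then bounds only the $k$-th component of the vector functional. Finally it passes to the distinguished boundary $\partial_0\mathbb{U}^n$, where every coordinate is admissible, and applies the maximum modulus theorem to control all components.

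The paper already uses the collapse for the $k$-th coordinate in \eqref{Th-2.12}. You apply it to the whole vector, writing it as $\bigl(P_2(z)-\mu P_1(z)^2\bigr)z$. This makes the distinguished-boundary and maximum-modulus step unnecessary, and it lets you quote Theorem~\ref{Th-1} on $X=(\mathbb{C}^n,\|\cdot\|_\infty)$ instead of re-deriving the coefficient relations. You even obtain the homogeneous bound $B(\alpha,\mu)\|z\|_\infty^3$.

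Your sharpness discussion is also more complete than the paper's. The paper uses only $F(z)=z(1-z_1)^{-2(1-\alpha)}$. For that map, the left-hand side at $z=(r,0,\dots,0)$ equals $(1-\alpha)\,|2(1-\alpha)(2\mu-1)-1|\,r^3$, so it reaches the bound only when $|2(1-\alpha)(2\mu-1)-1|\ge 1$. Your second extremal $\zeta(1-\zeta^2)^{-(1-\alpha)}$, with $a_2=0$ and $a_3=1-\alpha$, is exactly what covers the remaining range. Your Sherman--Morrison verification that $z\,g(z_1)\in\mathcal{S}^*_\alpha(\mathbb{U}^n)$ is sound. Your remark that equality is only approached as $r\to1^-$, because the right-hand side has degree one, is also accurate, whereas the paper asserts equality for each $r$.
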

 
 \begin{rem} In the special case $X = \mathbb{C}$ and $B = \mathbb{U}$, Theorems \ref{Th-1} and \ref{Th-2} reduce to Theorem A.
 \end{rem}
 
 \subsection{{Auxiliary lemmas}}
 Let $\mathcal{P}$ denote the class of analytic functions $p$ in the open unit disk $\mathbb{U}$ satisfying $p(0) = 1$ and $\Re\{p(z)\}> 0$ for all $z \in \mathbb{U}$. A member of $\mathcal P$ is called a Carath\'eodory function. Coefficient bounds for functions in $\mathcal{P}$ play a fundamental role in solving various coefficient problems. The lemmas below are instrumental in establishing our main results.
\begin{lem}\label{Lem1} \emph{\cite{Cho-Kumar-Ravichandran-2019}}
Let $\displaystyle p(z)=1+\sum_{n=1}^{\infty}p_nz^n\in\mathcal{P}$. Then 
\begin{align*}
\displaystyle
|p_n|\leq 2\quad \text{and}\quad |p_n-p_mp_{n-m}|\leq 2,\quad n\geq 2,\;1\leq m\leq n-1.
\end{align*}
\end{lem}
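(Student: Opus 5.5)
Since Lemma~\ref{Lem1} is a classical statement about Carath\'eodory functions, the plan is to prove it directly from the Herglotz representation combined with positivity of a Toeplitz form. First, by Herglotz's theorem every $p\in\mathcal P$ can be written as
\begin{align*}
p(z)=\int_0^{2\pi}\frac{1+e^{-it}z}{1-e^{-it}z}\,d\nu(t),
\end{align*}
with $\nu$ a probability measure on $[0,2\pi]$. Expanding the kernel in a geometric series gives $p_k=2c_k$ for $k\ge1$, where
\begin{align*}
c_k=\int_0^{2\pi}e^{-ikt}\,d\nu(t).
\end{align*}
Since $|c_k|\le \nu([0,2\pi])=1$, the bound $|p_n|\le 2$ is immediate. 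Set also $c_0=1$ and $c_{-k}=\overline{c_k}$.

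For the second inequality, note that in terms of the $c_k$ we must show
\begin{align*}
|c_n-2c_mc_{n-m}|\le 1 .
\end{align*}
The key step is the positivity of the Toeplitz form: for any complex numbers $x_j$,
\begin{align*}
\sum_{j,k}c_{j-k}x_j\overline{x_k}=\int_0^{2\pi}\Big|\sum_j x_je^{-ijt}\Big|^2d\nu(t)\ge 0 .
\end{align*}
I will restrict this to the three indices $0,m,n$, which are distinct because $1\le m\le n-1$. This shows that the Hermitian matrix
\begin{align*}
\begin{pmatrix}1&\overline{c_m}&\overline{c_n}\\ c_m&1&\overline{c_{n-m}}\\ c_n&c_{n-m}&1\end{pmatrix}
\end{align*}
is positive semidefinite. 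If $|c_m|<1$, the Schur complement with respect to the $(1,1)$ entry, or equivalently a direct expansion of the determinant as a quadratic in $c_n$, gives
\begin{align*}
|c_n-c_mc_{n-m}|\le \sqrt{(1-|c_m|^2)(1-|c_{n-m}|^2)} .
\end{align*}
This is the standard description of $c_n$ as lying in a disk centred at $c_mc_{n-m}$. The degenerate case $|c_m|=1$ is treated separately: then the $2\times2$ principal minor on indices $0,m$ vanishes, so the first two columns are dependent, and this forces $c_n=c_mc_{n-m}$.

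Finally, write $a=|c_m|$ and $b=|c_{n-m}|$, both in $[0,1]$. By the triangle inequality,
\begin{align*}
|c_n-2c_mc_{n-m}|\le |c_n-c_mc_{n-m}|+|c_mc_{n-m}|\le ab+\sqrt{(1-a^2)(1-b^2)} .
\end{align*}
The right-hand side is at most $1$ by the Cauchy--Schwarz inequality applied to the vectors $(a,\sqrt{1-a^2})$ and $(b,\sqrt{1-b^2})$. Multiplying by $2$ yields $|p_n-p_mp_{n-m}|\le 2$.

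The only step requiring care is extracting the disk inequality from positive semidefiniteness of the $3\times 3$ matrix, including the degenerate boundary cases. Everything else is routine.
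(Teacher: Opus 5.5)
Your argument is correct. It necessarily takes a different route from the paper, because the paper gives no proof of Lemma~\ref{Lem1} at all and simply imports it from the survey of Cho, Kumar and Ravichandran.

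\textbf{What your route buys.} Your Herglotz--Toeplitz argument makes the lemma self-contained, and it proves more than is stated. The intermediate disk estimate $|c_n-c_mc_{n-m}|\le\sqrt{(1-|c_m|^2)(1-|c_{n-m}|^2)}$ becomes, in terms of $p$, the bound $\bigl|p_n-\tfrac12 p_mp_{n-m}\bigr|\le\tfrac12\sqrt{(4-|p_m|^2)(4-|p_{n-m}|^2)}$. For $n=2$, $m=1$ this is exactly Lemma~\ref{Lem2}. So both auxiliary lemmas the paper relies on come out of your single argument, whereas the paper cites them from two separate sources.

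\textbf{The step you flag as delicate is easier than you make it.} For your $3\times3$ matrix $M$ one has the identity $\det M=(1-|c_m|^2)(1-|c_{n-m}|^2)-|c_n-c_mc_{n-m}|^2$. Since $\det M\ge0$ for every positive semidefinite matrix, the disk inequality holds with no case distinction. At $|c_m|=1$ it forces $c_n=c_mc_{n-m}$ automatically, so your separate column-dependence argument, while correct, is unnecessary.

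\textbf{A small imprecision.} The Schur complement with respect to the $(1,1)$ entry does not display the disk directly. Its positivity gives $|c_{n-m}-\overline{c_m}c_n|^2\le(1-|c_m|^2)(1-|c_n|^2)$, and you only reach the disk through the same determinant identity. The Schur complement with respect to the middle (index-$m$) entry is the one that gives it immediately. That entry equals $1$, so this complement is always defined. It is the $2\times2$ matrix with diagonal entries $1-|c_m|^2$ and $1-|c_{n-m}|^2$ and lower off-diagonal entry $c_n-c_mc_{n-m}$, and its positive semidefiniteness is exactly the disk inequality.

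These are presentational points only. The Cauchy--Schwarz step and the rescaling to $|p_n-p_mp_{n-m}|\le 2$ are fine.
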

\begin{lem}\label{Lem2}\emph{\cite[Problem 2.1.9]{Graham-Kohr}}
Let $\displaystyle p(z)=1+\sum_{n=1}^{\infty}p_nz^n\in\mathcal{P}$. Then
\begin{align*}
\displaystyle
\left|p_2-\frac{1}{2}p_1^2\right|\leq 2-\frac{1}{2}|p_1|^2.
\end{align*}	
\end{lem}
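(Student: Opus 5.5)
The plan is to reduce Lemma \ref{Lem2} to the classical Schwarz--Pick estimate for the second coefficient of a Schwarz function. Given $p\in\mathcal{P}$, define
\begin{align*}
w(z)=\frac{p(z)-1}{p(z)+1},\qquad z\in\mathbb{U}.
\end{align*}
Since $\Re\{p(z)\}>0$, the point $p(z)$ is closer to $1$ than to $-1$, so $|w(z)|<1$. Also $w(0)=0$, so $w$ is a Schwarz function. We write $w(z)=c_1z+c_2z^2+\cdots$ and invert the relation as $p=(1+w)/(1-w)=1+2w+2w^2+\cdots$. Comparing the coefficients of $z$ and $z^2$ gives
\begin{align*}
p_1=2c_1,\qquad p_2=2c_2+2c_1^2 .
\end{align*}
Hence $p_2-\tfrac12p_1^2=2c_2+2c_1^2-2c_1^2=2c_2$ and $\tfrac12|p_1|^2=2|c_1|^2$. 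The claimed inequality is therefore equivalent to
\begin{align*}
|c_2|\leq 1-|c_1|^2 .
\end{align*}

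To prove this coefficient bound, let $g(z)=w(z)/z$. This function is holomorphic on $\mathbb{U}$, with $g(0)=c_1$ and $g'(0)=c_2$. By the Schwarz lemma applied on circles $|z|=r\to1$, we get $|g|\leq1$ on $\mathbb{U}$. We then split into two cases.
\begin{enumerate}[(i)]
\item If $|c_1|=1$, the maximum principle forces $g$ to be a unimodular constant. Then $c_2=0$ and the inequality holds with equality.
\item If $|c_1|<1$, the maximum principle gives $g(\mathbb{U})\subset\mathbb{U}$. The Schwarz--Pick lemma at the origin then yields $|g'(0)|\leq 1-|g(0)|^2$, which is exactly $|c_2|\leq1-|c_1|^2$.
\end{enumerate}
Multiplying by $2$ gives $\left|p_2-\tfrac12p_1^2\right|\leq 2-\tfrac12|p_1|^2$.

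The only step requiring care is the coefficient bookkeeping in $p=(1+w)/(1-w)$. Here one must note that the $z^2$-coefficient of $2w^2$ is $2c_1^2$ and that higher powers $w^k$ with $k\ge3$ do not contribute. Once this is checked, the cancellation of the $c_1^2$ terms makes the rest immediate. No earlier result of the paper is needed beyond the definition of $\mathcal{P}$.
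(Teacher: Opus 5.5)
Your proof is correct. The paper gives no proof of this lemma; it only cites Graham--Kohr (Problem 2.1.9), so the comparison is with a citation rather than an argument.

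Your route is the standard self-contained solution of that exercise:
\begin{enumerate}[(i)]
\item pass to the Schwarz function $w=(p-1)/(p+1)$;
\item use the identities $p_1=2c_1$ and $p_2=2c_2+2c_1^2$ to reduce the claim to $|c_2|\le 1-|c_1|^2$;
\item apply Schwarz--Pick to $w(z)/z$, treating the case $|c_1|=1$ separately.
\end{enumerate}

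The argument also identifies the equality cases at no extra cost. Equality holds exactly when $w(z)/z$ is a unimodular constant or an automorphism of $\mathbb{U}$.
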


\begin{proof}[\bf Proof of Theorem \ref{Th-1}]
By the given condition, we have $F\in\mathcal{S}^*_{\alpha}(\mathbb{B})$, where $0\leq \alpha<1$. Using \eqref{SL1}, the condition of starlikeness of order $\alpha$ can be expressed as
\begin{align*}
\displaystyle
\Re\left\lbrace \frac{\|z\|}{l_z\left([D F(z)]^{-1}F(z)\right)}\right\rbrace>\alpha, \quad z\in\mathbb{B}\backslash \{0\},\quad l_z\in T(z).
\end{align*}

Let $z\in \mathbb{B}\backslash\{0\}$ be fixed and let $z_0=z/\|z\|$. We define the function $g:\mathbb{U}\to \mathbb{C}$ as follows
\begin{align*}	
\displaystyle
	g(\zeta)=\begin{cases}
		\frac{\zeta}{l_{z_0}\left([D F(\zeta z_0)]^{-1}F(\zeta z_0)\right)},& \zeta\neq 0,\\[8pt]
		 1& \zeta=0.
	\end{cases}
\end{align*}

Clearly $g\in\mathcal{H}(\mathbb{U})$ and $g(0)=1$.
We know that there is a one-to-one correspondence between $T(\zeta z)$ and $T(z)$ given by $l_{\zeta z}(.)=\frac{|\zeta|}{\zeta}l_z$, for each $\zeta\in\mathbb{C}\backslash\{0\}$. Therefore, we have
\begin{align*}
\displaystyle
g(\zeta)=\frac{\zeta}{l_{z_0}\left([D F(\zeta z_0)]^{-1}F(\zeta z_0)\right)}=\frac{\|\zeta z_0\|}{l_{\zeta z_0}\left([D F(\zeta z_0)]^{-1}F(\zeta z_0)\right)},\qquad \text{for} \ \zeta\in\mathbb{U}\backslash\{0\}.
\end{align*}	
Consequently $\Re\{g(\zeta)\}>\alpha$, for all $\zeta\in\mathbb{U}$.
Let \begin{align}\label{Th-1.1}
	h(\zeta)=\frac{g(\zeta)-\alpha}{1-\alpha}, \qquad\text{for all} \ \zeta\in\mathbb{U}.
	\end{align}
	Clearly, from \eqref{Th-1.1} we conclude that $h\in\mathcal{H}(\mathbb{U})$ with $\Re\{h(\zeta)\}>0$, for all $\zeta\in\mathbb{U}$ and $h(0)=1$.
Using a similar method as in Theorem 7.1.14 \cite{Graham-Kohr}, we have
\begin{align}\label{Th-1.2b}
	[D F(z)]^{-1}\eta=\frac{1}{f(z)}\left\lbrack \eta-\frac{\frac{zDf(z)}{f(z)}}{1+\frac{Df(z)z \eta}{f(z)}}\right\rbrack,\quad \eta\in X.
\end{align}

Therefore 
\begin{align}\label{Th-1.2a}
	[D F(z)]^{-1}F(z)=\frac{zf(z)}{f(z)+D f(z)z},
\end{align}
which implies that
\begin{align*}
\displaystyle
	\frac{\|z\|}{l_z\left([D F(z)]^{-1}F(z)\right)}=1+\frac{Df(z) z}{f(z)}
\end{align*}
and so
\begin{align}\label{Th-1.2}
\displaystyle
g(\zeta)=\frac{\|\zeta z_0\|}{l_{\zeta z_0}\left([D F(\zeta z_0)]^{-1}F(\zeta z_0)\right)}=1+\frac{Df(\zeta z_0) \zeta z_0}{f(\zeta z_0)}.
\end{align}
From the series expansions of $h(\zeta)$ and $f(\zeta z_0)$, it follows that
\begin{align}\label{Th-1.3}
\displaystyle
h(\zeta)=1+h'(0)\zeta+\frac{h''(0)}{2}\zeta^2+\ldots
\end{align}
and
\begin{align}\label{Th-1.4}
\displaystyle
f(\zeta z_0)=1+Df(0)(z_0)\zeta+\frac{1}{2!}D^2 f(0)(z_0^2)\zeta^2+\ldots.
\end{align}

Clearly
\begin{align}\label{Th-1.5}
\displaystyle
Df(\zeta z_0)(\zeta z_0)=Df(0)(z_0)\zeta+D^2 f(0)(z_0^2)\zeta^2+\ldots
\end{align}

Now using \eqref{Th-1.2}-\eqref{Th-1.5} to \eqref{Th-1.1}, we get
\begin{align}\label{Th-1.6}
\displaystyle
&\left(1+(1-\alpha)h'(0)\zeta+(1-\alpha)\frac{h''(0)}{2}\zeta^2+\ldots\right)\times\\&\left(1+Df(0)(z_0)\zeta+\frac{1}{2!}D^2 f(0)(z_0^2)\zeta^2+\ldots\right)\nonumber\\=&
\left(1+Df(0)(z_0)\zeta+\frac{1}{2!}D^2 f(0)(z_0^2)\zeta^2+\ldots\right)+\left(Df(0)(z_0)\zeta+D^2 f(0)(z_0^2)\zeta^2+\ldots\right).\nonumber
\end{align}

Clearly from \eqref{Th-1.6}, we find that
\begin{align}\label{Th-1.7}
\displaystyle
(1-\alpha)h'(0)=Df(0)(z_0)\;\;\text{and}\;\;(1-\alpha)\frac{h''(0)}{2}=D^2f(0)(z_0^2)-\left(Df(0)(z_0)\right)^2
\end{align}

Since $z_0=z/\|z\|$, from \eqref{Th-1.7}, we obtain 
\begin{align}\label{Th-1.9a}
	\displaystyle
	\frac{D^2f(0)(z^2)}{2!}&=\|z\|^2\frac{1}{2}\left((1-\alpha)\frac{h''(0)}{2}+((1-\alpha)h'(0))^2\right),\\\label{Th-1.10a}
	Df(0)(z))&=||z||(1-\alpha)h'(0).
\end{align}

Again since $F(z)=zf(z)$, it follows that
\begin{align*}
\displaystyle
\frac{D^3F(0)(z^3)}{3!}=\frac{D^2f(0)(z^2)}{2!}z\quad \text{and}\quad \frac{D^2F(0)(z^2)}{2!}=Df(0)(z) z.
\end{align*}

Consequently
\begin{align}\label{Th-1.8}
\displaystyle
	\frac{l_{z}(D^3F(0)(z^3))}{3!}=\frac{D^2f(0)(z^2)}{2!}\|z\|\quad \text{and}\quad \frac{l_z(D^2F(0)(z^2))}{2!}=Df(0)(z) \|z\|.
\end{align}

Using \eqref{Th-1.8} to \eqref{Th-1.9a} and \eqref{Th-1.10a}, we get 
\begin{align}\label{Th-1.9}
\displaystyle
	\frac{l_{z}(D^3F(0)(z^3))}{3!\|z\|^3}&=\frac{1}{2}\left((1-\alpha)\frac{h''(0)}{2}+((1-\alpha)h'(0))^2\right),\\\label{Th-1.10}
 \frac{l_z(D^2F(0)(z^2))}{2!\|z\|^2}&=(1-\alpha)h'(0).
\end{align}

Now from \eqref{Th-1.9} and \eqref{Th-1.10}, we deduce that 
\begin{align*}
\displaystyle
&\left|\frac{l_{z}(D^3F(0)(z^3))}{3!\|z\|^3}-\mu\left(\frac{l_z(D^2F(0)(z^2))}{2!\|z\|^2}\right)^2\right|\\=&
\frac{1}{2}\left|(1-\alpha)\frac{h''(0)}{2}-(2\mu-1)((1-\alpha)h'(0))^2\right|\\=&
\frac{1-\alpha}{2}\left|\frac{h''(0)}{2}-(2\mu-1)(1-\alpha)( h'(0))^2\right|\\=&
\frac{1-\alpha}{2}\left|\frac{h''(0)}{2}-\frac{1}{2}(h'(0))^2+\frac{1-2(1-\alpha)(2\mu-1)}{2}(h'(0))^2\right|\\\leq&
\frac{1-\alpha}{2}\left(\left|\frac{h''(0)}{2}-\frac{1}{2}(h'(0))^2\right|+\frac{|2(1-\alpha)(2\mu-1)-1|}{2}\left|(h'(0))^2\right|\right).
\end{align*}
Therefore, using Lemma \ref{Lem2}, we obtain
\begin{align}\label{Th-1.11}
\displaystyle
&\left|\frac{l_{z}(D^3F(0)(z^3))}{3!\|z\|^3}-\mu\left(\frac{l_z(D^2F(0)(z^2))}{2!\|z\|^2}\right)^2\right|\\\leq& 
	\frac{1-\alpha}{2}\left(2-\frac{1}{2}|h'(0)|^2+\frac{|2(1-\alpha)(2\mu-1)-1|}{2}|h'(0)|^2\right).\nonumber
	\end{align}
On the other hand, by Lemma \ref{Lem1}, we know that $|h'(0)|\in [0,2]$. Therefore maximizing the expression \eqref{Th-1.11} in $|h'(0)|$ over $[0,2]$ gives the result.\vspace{1.2mm}

To establish the sharpness of the bound, let us consider the function $F$ defined by
\begin{align*}
\displaystyle
	F(z)=zf(z)=\frac{z}{\left(1-l_u(z)\right)^{2(1-\alpha)}},\quad z\in\mathbb{B},\quad \|u\|=1.
\end{align*}

According to Lemma 2.2 \cite{Xu-He-Xu}, it is clear that $F\in\mathcal{S}^*_{\alpha}(\mathbb{B})$. Note that
\begin{align*}
f(z)=&1+2(1-\alpha)l_u(z)+\frac{2(1-\alpha)(3-2\alpha)}{2!}(l_u(z))^2\\&+\frac{2(1-\alpha)(3-2\alpha)(4-2\alpha)}{3!}(l_u(z))^3+\ldots.
\end{align*}
Since, $F(z)=zf(z)$, we have
\begin{align*}
	F(z)=&z+2(1-\alpha)l_u(z)z+\frac{2(1-\alpha)(3-2\alpha)}{2!}(l_u(z))^2z\\&+\frac{2(1-\alpha)(3-2\alpha)(4-2\alpha)}{3!}(l_u(z))^3z+\ldots
\end{align*}
Consequently
\begin{align*}
\displaystyle
\frac{D^2F(0)(z^2)}{2!}=2(1-\alpha)l_u(z)z\quad \text{and}\quad \frac{D^3F(0)(z^3)}{3!}=(1-\alpha)(3-2\alpha)(l_u(z))^2z.
\end{align*}
Therefore
\begin{align*}
\displaystyle
	\frac{l_z(D^2F(0)(z^2))}{2!}=2(1-\alpha)l_u(z)\|z\|
\;\text{and}\;
 \displaystyle
 \frac{l_z(D^3F(0)(z^3))}{3!}=(1-\alpha)(3-2\alpha)(l_u(z))^2\|z\|.
\end{align*}
Taking $z=ru\;(0<r<1)$, we get
\begin{align*}
\displaystyle
	\frac{l_z(D^2F(0)(z^2))}{2!\|z\|^2}=2(1-\alpha)\quad \text{and}\quad
	\frac{l_z(D^3F(0)(z^3))}{3!\|z\|^3}=(1-\alpha)(3-2\alpha).
\end{align*}
Hence
\begin{align*}
\displaystyle
	\left|\frac{l_{z}(D^3F(0)(z^3))}{3!\|z\|^3}-\mu\left(\frac{l_z(D^2F(0)(z^2))}{2!\|z\|^2}\right)^2\right|=(1-\alpha)\max\{1,|2(1-\alpha)(2\mu-1)-1|\}.
\end{align*}
This proves the desired sharp estimate.
\end{proof}
\begin{figure}[H]
	\centering
	\begin{tikzpicture}
		\begin{axis}[
			width=12.5cm,
			height=7.5cm,
			xmin=0,
			xmax=0.95,
			ymin=0,
			ymax=11,
			xlabel={$\alpha$},
			ylabel={$\mu$},
			axis lines=left,
			grid=both,
			major grid style={gray!30,line width=0.35pt},
			minor grid style={gray!15,line width=0.2pt},
			minor x tick num=1,
			minor y tick num=1,
			xtick={0,0.2,0.4,0.6,0.8},
			ytick={0,1,2,3,4,5,6,7,8,9,10},
			legend style={
				at={(0.98,0.98)},
				anchor=north east,
				font=\small,
				draw=black!40,
				fill=white
			},
			clip=false
			]
			
			\addplot[
			name path=lower,
			domain=0:0.95,
			very thick
			]
			{0.5};
			\addlegendentry{$\mu=\frac12$}
			
			\addplot[
			name path=upper,
			domain=0:0.95,
			very thick,
			dashed
			]
			{0.5+1/(2*(1-x))};
			\addlegendentry{
				$\displaystyle
				\mu=\frac12+\frac{1}{2(1-\alpha)}$
			}
			
			\addplot[
			fill=blue!12,
			draw=none,
			domain=0:0.95
			]
			{0.5+1/(2*(1-x))}
			\closedcycle;
			
			\node[
			align=center,
			font=\small
			] at (axis cs:0.42,2.0)
			{$A(\alpha,\mu)\leq1$\\
				\scriptsize $B(\alpha,\mu)=1-\alpha$};
			
			\node[
			align=center,
			font=\small
			] at (axis cs:0.35,7.2)
			{$A(\alpha,\mu)>1$\\
				\scriptsize
				$B(\alpha,\mu)
				=(1-\alpha)A(\alpha,\mu)$};
			
			\fill (axis cs:0,0.5) circle (2pt);
			\fill (axis cs:0,1.0) circle (2pt);
			
		\end{axis}
	\end{tikzpicture}
	
	\caption{
		Parameter regions associated with the two branches of the sharp
		bound
		\[
		B(\alpha,\mu)
		=(1-\alpha)
		\max\left\{
		1,\,
		\left|2(1-\alpha)(2\mu-1)-1\right|
		\right\},
		\qquad 0\leq\alpha<1.
		\]
		For real $\mu$, the transition curves are
		$\mu=\frac12$ and
		$\mu=\frac12+\frac{1}{2(1-\alpha)}$.
		Between these curves the bound equals $1-\alpha$,
		while outside this region the second branch is active.
	}
	\label{fig:parameter-regions}
\end{figure}
\begin{proof}[\bf Proof of Theorem \ref{Th-2}]
In view of the assumption $F\in\mathcal{S}^*_{\alpha}(\mathbb{U}^n)$, where $0\leq \alpha<1$, we have
\begin{align}\label{Th-2.1}
\displaystyle
		\Re\left\lbrace \frac{\|z\|_{\infty}}{l_z\left([D F(z)]^{-1}F(z)\right)}\right\rbrace>\alpha, \quad z\in\mathbb{U}^n\backslash \{0\},\quad l_z\in T(z).
	\end{align}
Let $w(z)=(w_1(z),w_2(z),\ldots,w_n(z))'=[D F(z)]^{-1}F(z)$ be the column vector in $\mathbb{C}^n$. According to (\ref{eq3}), we have
\begin{align}\label{Th-2.2}
\displaystyle
l_z(w)=\sum\limits_{\{j:||z||_{\infty}=|z_j|\}}t_j\frac{w_j(z)\ol z_j}{||z||_{\infty}},
\end{align}
where each $t_j\geq 0$ and $\sum\limits_{\{j:||z||_{\infty}=|z_j|\}}t_j=1.$\\
Let $k\in\{1,2,\ldots,n\}$ be such that $||z||_{\infty}=|z_k|$. Choose the admissible functional
corresponding to $t_k=1$ and $t_j=0$, where $j\in\{1,2,\ldots,n\}$ such that $j\neq k$. Then from \eqref{Th-2.1} and \eqref{Th-2.2}, we have
\begin{align}\label{Th-2.3}
\displaystyle
\Re\left\lbrace \frac{z_k}{w_k(z)}\right\rbrace>\alpha,\quad z\in\mathbb{U}^n\backslash\{0\},
\end{align}
where $w_k(z)$ denotes the $k$th coordinate of $w(z)$ corresponding to an index $k$ satisfying $\|z\|_\infty=|z_k|$.
Let $z\in\mathbb{U}^n\backslash\{0\}$ and let $z_0=z/\|z\|_{\infty}$. We define the function $g_k:\mathbb{U}\to \mathbb{C}$ as follows
\begin{align}\label{Th-2.4}	
\displaystyle
	g_k(\zeta)=\begin{cases}
		\frac{\zeta z_k}{w_k(\zeta z_0)},& \zeta\neq 0,\\[8pt]
		1& \zeta=0.
	\end{cases}
\end{align}

Consequently, from \eqref{Th-2.3} and \eqref{Th-2.4}, we have $\Re\{g_k(\zeta)\}>0$ 
for all $\zeta\in\mathbb{U}$. 
Let
\begin{align}\label{Th-2.5}
\displaystyle
	h_k(\zeta)=\frac{g_k(\zeta)-\alpha}{1-\alpha},\qquad\text{for all} \ \zeta\in\mathbb{U}.
\end{align}
Since $\Re\{g_k(\zeta)\}>0$ 
for all $\zeta\in\mathbb{U}$, it follows that from \eqref{Th-2.5}, we obtain $h_k\in\mathcal{H}(\mathbb{U})$ with $\Re\{h_k(\zeta)\}>0$ for all $\zeta\in\mathbb{U}$ and $h_k(0)=1$.
On the other hand, from \eqref{Th-1.2a}, we have
\begin{align*}
\displaystyle
w_k(z)=z_k\left\lbrack \frac{f(z)}{f(z)+Df(z)z}\right\rbrack,
\end{align*}
which implies that
\begin{align}\label{Th-2.6}
\displaystyle
	w_k(\zeta z_0)=\zeta z_k\left\lbrack \frac{f(\zeta z_0)}{f(\zeta z_0)+Df(\zeta z_0)\zeta z_0}\right\rbrack.
\end{align}

Substituting \eqref{Th-2.4} and \eqref{Th-2.6} into \eqref{Th-2.5}, we get
\begin{align}\label{Th-2.7}
\displaystyle
\left((1-\alpha)h_k(\zeta)+\alpha\right)f(\zeta z_0)=f(\zeta z_0)+Df(\zeta z_0)\zeta z_0.	
\end{align} 

Expanding both sides of \eqref{Th-2.7} into homogeneous polynomials and equating coefficients of
homogeneous terms we obtain
\begin{align}\label{Th-2.8}
\displaystyle
	(1-\alpha)h_k'(0)=Df(0)(z_0)\;\;\text{and}\;\;(1-\alpha)\frac{h_k''(0)}{2}=D^2f(0)(z_0^2)-\left(Df(0)(z_0)\right)^2.
\end{align}

Now from \eqref{Th-2.8}, we obtain 
\begin{align}\label{Th-2.9}
\displaystyle
	\frac{D^2f(0)(z_0^2)}{2!}=\frac{1}{2}\left((1-\alpha)\frac{h_{k}^{''}(0)}{2}+((1-\alpha)h_{k}^{'}(0))^2\right)\ \text{and} \ Df(0)(z_0)=(1-\alpha)h_{k}^{'}(0).
\end{align}
Let $F(z)=(F_1(z),F_2(z),\ldots,F_n(z))$. Since $F(z)=zf(z)$, it follows that
\begin{align*}
\displaystyle
	\frac{D^3F_k(0)(z^3)}{3!}=\frac{D^2f(0)(z^2)}{2!}z_k\quad \text{and}\quad \frac{D^2F_k(0)(z^2)}{2!}=Df(0)(z) z_k.
\end{align*}

Since $z_0=z/\|z\|_\infty$, it follows that
\begin{align}\label{Th-2.11}
\displaystyle
	\frac{D^3F_k(0)(z_0^3)}{3!}=\frac{D^2f(0)(z_0^2)}{2!}\frac{z_k}{||z||_{\infty}}\quad \text{and}\quad \frac{D^2F_k(0)(z_0^2)}{2!}=Df(0)(z_0) \frac{z_k}{||z||_{\infty}}.
\end{align}

In view of \eqref{Th-2.11}, we obtain
\begin{align}\label{Th-2.12}
\displaystyle
&\left|\mu\frac{1}{2!}D^2F_k(0)\left(z_0,\frac{D^2F(0)(z_0^2)}{2!}\right)\frac{||z||_{\infty}}{z_k}-\frac{D^3F_k(0)(z_0^3)}{3!}\frac{||z||_{\infty}}{z_k}\right|\\=&
\left|\mu\frac{1}{2!}D^2F_k(0)\left(z_0,Df(0)(z_0)z_0\right)\frac{||z||_{\infty}}{z_k}-\frac{D^2f(0)(z_0^2)}{2!}\right|\nonumber\\=&
\left|\mu Df(0)(z_0)\frac{1}{2!}D^2F_k(0)\left(z_0,z_0\right)\frac{||z||_{\infty}}{z_k}-\frac{D^2f(0)(z_0^2)}{2!}\right|
\nonumber\\=&
\left|\mu Df(0)(z_0)\frac{1}{2!}D^2F_k(0)\left(z_0^2\right)\frac{||z||_{\infty}}{z_k}-\frac{D^2f(0)(z_0^2)}{2!}\right|
\nonumber\\=&
\left|\mu\left(Df(0)(z_0)\right)^2-\frac{D^2f(0)(z_0^2)}{2!}\right|.\nonumber
\end{align}

Now using \eqref{Th-2.9} to \eqref{Th-2.12}, we get
\begin{align}\label{k2}
\displaystyle
&\left|\frac{D^3F_k(0)(z_0^3)}{3!}\frac{||z||_{\infty}}{z_k}-\mu\frac{1}{2!}D^2F_k(0)\left(z_0,\frac{D^2F(0)(z_0^2)}{2!}\right)\frac{||z||_{\infty}}{z_k}\right|\\=&
\frac{1}{2}\left|(1-\alpha)\frac{h''(0)}{2}-(2\mu-1)((1-\alpha)h'(0))^2\right|\nonumber\\\leq&
\frac{1-\alpha}{2}\left(\left|\frac{h''(0)}{2}-\frac{1}{2}(h'(0))^2\right|+\frac{|2(1-\alpha)(2\mu-1)-1|}{2}\left|(h'(0))^2\right|\right).\nonumber
\end{align}
Applying Lemma \ref{Lem2} in \eqref{k2}, we obtain
\begin{align}\label{Th-2.13}
\displaystyle
&\left|\frac{D^3F_k(0)(z_0^3)}{3!}\frac{||z||_{\infty}}{z_k}-\mu\frac{1}{2!}D^2F_k(0)\left(z_0,\frac{D^2F(0)(z_0^2)}{2!}\right)\frac{||z||_{\infty}}{z_k}\right|\\\leq& 
\frac{1-\alpha}{2}\left(2-\frac{1}{2}|h'(0)|^2+\frac{|2(1-\alpha)(2\mu-1)-1|}{2}|h'(0)|^2\right).\nonumber
\end{align}

On the other hand, by Lemma \ref{Lem1}, we know that $|h'(0)|\in [0,2]$. Hence maximization of the expression \eqref{Th-2.13} in $|h'(0)|$ over $[0,2]$ yields (see Figure \ref{fig:optimization}).
\begin{figure}[H]
	\centering
	
	\begin{tikzpicture}
		\begin{axis}[
			width=10cm,
			height=7.6cm,
			xmin=0,
			xmax=2.05,
			ymin=1.0,
			ymax=4.3,
			axis lines=left,
			xlabel={$x=|h'(0)|$},
			ylabel={$\Phi(x)$},
			xtick={0,0.5,1,1.5,2},
			ytick={1,1.5,2,2.5,3,3.5,4},
			minor x tick num=1,
			minor y tick num=1,
			grid=both,
			major grid style={gray!35,line width=0.3pt},
			minor grid style={gray!15,line width=0.2pt},
			legend style={
				draw=black!40,
				fill=white,
				rounded corners=1pt,
				font=\scriptsize,
				legend pos=outer north east,
				row sep=2pt,
			},
			samples=150,
			domain=0:2,
			every axis plot/.append style={very thick},
			clip=false,
			]
			
			\foreach \A in {0.7,0.85,1.15,1.3,1.5,1.7}{
				\addplot[gray!55,thin,samples=60,domain=0:2,forget plot]
				{2+((\A-1)/2)*x^2};
			}
			\addlegendimage{gray!55,thin}
			\addlegendentry{$A\in(0,2)$ (family)}
			
				\addplot[blue,fill=blue,fill opacity=0.08,forget plot]
			{2-0.25*x^2} \closedcycle;
			
			\addplot[blue]
			{2-0.25*x^2};
			\addlegendentry{$A=0.5$}
			
			\addplot[black,dashed]
			{2};
			\addlegendentry{$A=1$}
			
			\addplot[red,fill=red,fill opacity=0.08,forget plot]
			{2+0.5*x^2} \closedcycle;
			
			\addplot[red]
			{2+0.5*x^2};
			\addlegendentry{$A=2$}

			\fill[blue]
			(axis cs:0,2)
			circle(2.6pt);
			
			\fill[red]
			(axis cs:2,4)
			circle(2.6pt);
			
			\node[blue,above right,inner sep=2pt]
			at (axis cs:0,2.2)
			{\scriptsize Maximum};
			
			\node[red,above left,inner sep=2pt]
			at (axis cs:2,4)
			{\scriptsize Maximum};
			
			\node
			at (axis cs:0.62,2.14)
			{\scriptsize constant};
			
			\draw[->,blue]
			(axis cs:0.4,2.3)
			--
			(axis cs:0.02,2.07);
			
			\draw[->,red]
			(axis cs:1.62,3.55)
			--
			(axis cs:1.95,3.96);
			
		\end{axis}
	\end{tikzpicture}
	
	\caption{
	The function $\Phi_A(x)=2+\frac{A-1}{2}x^2,
	\qquad 0\le x\le2,$ where
	$	A=\left|2(1-\alpha)(2\mu-1)-1\right|.$
	If $A<1$, the maximum is attained at $x=0$; if $A=1$,
	the function is constant; and if $A>1$, the maximum is attained at $x=2$. Thus $\max_{0\le x\le2}\Phi_A(x)=2\max\{1,A\}.$	}
	
	\label{fig:optimization}
	
\end{figure}

\begin{align*}
\displaystyle
	&\left|\frac{D^3F_k(0)(z_0^3)}{3!}\frac{||z||_{\infty}}{z_k}-\mu\frac{1}{2!}D^2F_k(0)\left(z_0,\frac{D^2F(0)(z_0^2)}{2!}\right)\frac{||z||_{\infty}}{z_k}\right|\\\leq& 
	(1-\alpha)\max\{1,|2(1-\alpha)(2\mu-1)-1|\}.\nonumber
\end{align*}
Therefore, it follows that
\begin{align}\label{Th-2.13a}
\displaystyle
&\left|\frac{D^3F_k(0)(z_0^3)}{3!}-\mu\frac{1}{2!}D^2F_k(0)\left(z_0,\frac{D^2F_k(0)(z_0^2)}{2!}\right)\right|\\\leq& 
(1-\alpha)\max\{1,|2(1-\alpha)(2\mu-1)-1|\}.\nonumber
\end{align}

In particular, when $z_0\in\partial_0 \mathbb{U}^n$, then from \eqref{Th-2.13a}, we deduce that
\begin{align}\label{Th-2.13b}
	\displaystyle
	&\left|\frac{D^3F_j(0)(z_0^3)}{3!}-\mu\frac{1}{2!}D^2F_j(0)\left(z_0,\frac{D^2F(0)(z_0^2)}{2!}\right)\right|\\\leq& 
	(1-\alpha)\max\{1,|2(1-\alpha)(2\mu-1)-1|\},\quad j=1,2,\ldots,n.\nonumber
\end{align}

We know that
\begin{align*}
	\frac{D^3F_j(0)(z^3)}{3!}-\mu\frac{1}{2!}D^2F_j(0)\left(z,\frac{D^2F(0)(z^2)}{2!}\right)\in\mathcal{H}(\ol{\mathbb{U}^n}),\quad j=1,2,\ldots,n.
	\end{align*}

By the maximum modulus theorem for holomorphic functions on the unit polydisk, we deduce from \eqref{Th-2.13b} that
\begin{align*}
\displaystyle
	&\left|\frac{D^3F_j(0)(z_0^3)}{3!}-\mu\frac{1}{2!}D^2F_j(0)\left(z_0,\frac{D^2F(0)(z_0^2)}{2!}\right)\right|\\\leq& 
	(1-\alpha)\max\{1,|2(1-\alpha)(2\mu-1)-1|\}||z||_{\infty}\nonumber
\end{align*}
for all $z_0\in\partial \mathbb{U}^n$, where $j=1,2,\ldots,n$.
Hence
\begin{align*}
	\displaystyle
	&\left|\frac{D^3F(0)(z_0^3)}{3!}-\mu\frac{1}{2!}D^2F(0)\left(z_0,\frac{D^2F(0)(z_0^2)}{2!}\right)\right|\\\leq& 
	(1-\alpha)\max\{1,|2(1-\alpha)(2\mu-1)-1|\}||z||_{\infty}\nonumber
\end{align*}
for all $z_0\in\partial \mathbb{U}^n$ and so
\begin{align}\label{Th-2.14}
\displaystyle
	&\left\|\frac{D^3F(0)(z_0^3)}{3!}-\mu\frac{1}{2!}D^2F(0)\left(z_0,\frac{D^2F(0)(z_0^2)}{2!}\right)\right\|_{\infty}\\\leq& 
	(1-\alpha)\max\{1,|2(1-\alpha)(2\mu-1)-1|\}||z||_{\infty}\nonumber
\end{align}
for all $z\in\mathbb{U}^n$.

\vspace{1.2mm}

To verify the sharpness of the bound, consider the function
\begin{align*}
\displaystyle
F(z)=zf(z)=\left(\frac{z_1}{(1-z_1)^{2(1-\alpha)}},\frac{z_2}{(1-z_1)^{2(1-\alpha)}},\ldots,\frac{z_n}{(1-z_1)^{2(1-\alpha)}}\right).
\end{align*}

It is clear that $F\in\mathcal{S}^*_{\alpha}(\mathbb{U}^n)$. Also we have
\begin{align*}
\displaystyle
	\frac{D^2F(0)(z^2)}{2!}=2(1-\alpha)z_1z\quad \text{and}\quad \frac{D^3F(0)(z^3)}{3!}=(1-\alpha)(3-2\alpha)(1-z_1)^2z.
\end{align*}

Taking $z=(r,0,\ldots,0)$, where $0<r<1$ a direct computation shows that the equality of \eqref{Th-2.14} holds, which establishes the sharpness of the bound.
\end{proof}

Next, by removing the restrictive assumption $F(z) = f(z)z$, we generalize Theorem A to higher dimensions under weaker conditions than those in Theorem \ref{Th-1}.

\begin{theo}\label{Th-1a}
Let $F$ be a locally biholomorphic mapping on $\mathbb{B}$, and $F$ satisfy the following assumptions:
\begin{align}\label{Th1a}
\displaystyle
\frac{D^{k+1}F(0)(z^{k+1})}{(k + 1)!} = H_{F,k}(z)z, \quad z \in \mathbb{B}, \; k = 1, 2, 3,
\end{align}
where $H_{F,k}(z)$ is a homogeneous polynomial of degree $k$ with values in $\mathbb{C}$. Let $\mu$ be any complex number. If $F\in\mathcal{S}^*_{\alpha}(\mathbb{B})$, then the inequality \eqref{Th-A} holds and the bound is sharp.
\end{theo}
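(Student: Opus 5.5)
Following the proof of Theorem \ref{Th-1}, fix $z\in\mathbb{B}\setminus\{0\}$, put $z_0=z/\|z\|$, and define on $\mathbb{U}$
\[
g(\zeta)=\frac{\zeta}{l_{z_0}\left([DF(\zeta z_0)]^{-1}F(\zeta z_0)\right)}\quad(\zeta\neq0),\qquad g(0)=1 .
\]
As in that proof, the correspondence $l_{\zeta z_0}=\frac{|\zeta|}{\zeta}l_{z_0}$ together with \eqref{SL1} gives $\Re g>\alpha$ on $\mathbb{U}$. Hence $h=(g-\alpha)/(1-\alpha)\in\mathcal{P}$. The new difficulty is that $F$ is no longer of the form $zf(z)$, so formula \eqref{Th-1.2a} is unavailable. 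Instead I would work with the Taylor expansion of $F$ directly, using only hypothesis \eqref{Th1a}.

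Write $w(\zeta)=[DF(\zeta z_0)]^{-1}F(\zeta z_0)$. The identity $DF(\zeta z_0)w(\zeta)=F(\zeta z_0)$ can be solved order by order in $\zeta$. By \eqref{Th1a}, $DF(\zeta z_0)$ acts as follows: $D^{k+1}F(0)(z_0^{k},\cdot)/k!$ is the derivative of the map $u\mapsto H_{F,k}(u)u$ at $z_0$. That derivative is
\[
H_{F,k}(z_0)\,\cdot+DH_{F,k}(z_0)(\cdot)\,z_0 .
\]
Substituting $w(\zeta)=\zeta z_0+\zeta^2 w_2+\zeta^3 w_3+\dots$ and using induction, I expect each $w_j$ to be a scalar multiple of $z_0$ up to order $3$. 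The key point is that every term generated is either $H_{F,k}(z_0)$ times a multiple of $z_0$, or $DH_{F,k}(z_0)(z_0)z_0=kH_{F,k}(z_0)z_0$ (Euler's identity). Writing $a_k=H_{F,k}(z_0)$, this gives $l_{z_0}(w(\zeta))=\zeta\,\phi(\zeta)+O(\zeta^4)$, with $\phi$ computed exactly as in the one-variable case $F(\zeta)=\zeta+a_1\zeta^2+a_2\zeta^3+\dots$. Consequently
\[
g(\zeta)=1+a_1\zeta+(2a_2-a_1^2)\zeta^2+O(\zeta^3),
\]
which is the same relation as \eqref{Th-1.7}, with $Df(0)(z_0)$ replaced by $a_1$ and $\frac{1}{2}D^2f(0)(z_0^2)$ replaced by $a_2$. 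Only $k=1,2$ are actually needed here, since $k=3$ would only affect higher orders.

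Next, \eqref{Th1a} gives
\[
l_z\!\left(\frac{D^3F(0)(z^3)}{3!}\right)=\|z\|^3a_2,\qquad l_z\!\left(\frac{D^2F(0)(z^2)}{2!}\right)=\|z\|^2a_1 .
\]
The functional in \eqref{Th-A} therefore equals $a_2-\mu a_1^2$, with
\[
a_1=(1-\alpha)h'(0),\qquad a_2=\tfrac{1}{2}\left((1-\alpha)\tfrac{h''(0)}{2}+(1-\alpha)^2h'(0)^2\right).
\]
From here the estimate is verbatim the computation in the proof of Theorem \ref{Th-1}: apply Lemma \ref{Lem2}, then maximize over $|h'(0)|\in[0,2]$ using Lemma \ref{Lem1}. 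For sharpness, the extremal mapping $F(z)=z/(1-l_u(z))^{2(1-\alpha)}$ from Theorem \ref{Th-1} satisfies \eqref{Th1a} with $H_{F,k}$ a constant multiple of $l_u(z)^k$, so the same choice $z=ru$ gives equality.

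The main obstacle is the inversion step: showing rigorously that the $w_j$ stay collinear with $z_0$. This requires checking that $DF(\zeta z_0)$ maps $\mathrm{span}\{z_0\}$ into itself modulo $O(\zeta^3)$, and that the off-direction part $H_{F,k}(z_0)v$ never produces components outside that span. I would handle this by making the ansatz $w=\psi(\zeta)z_0$ and verifying that it solves the equation to order $\zeta^3$, then invoking uniqueness of the inverse.
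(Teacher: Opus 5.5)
Your derivation of the inequality is correct, but the sharpness step fails for an open set of $\mu$. The derivation is a modest variant of the paper's. Write $F_k,\varphi_k$ for the degree-$k$ homogeneous terms of $F$ and of $\varphi=[DF]^{-1}F$. The paper reads off $\varphi_2=-F_2$ and $\varphi_3(z)+D^2F(0)(z,\varphi_2(z))=-\frac13D^3F(0)(z^3)$ from $DF\,\varphi=F$. It then uses only that $\varphi_2(z_0)=-H_{F,1}(z_0)z_0$ is collinear with $z_0$, and handles the third-order term by applying $l_{z_0}$ without knowing its direction. So its bookkeeping in fact needs only the case $k=1$ of \eqref{Th1a}.

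You prove the stronger statement that $w(\zeta)$ is collinear with $z_0$ through order three. After that, everything is literally the one-variable computation of $\zeta\Phi'(\zeta)/\Phi(\zeta)$ with $\Phi(\zeta)=\zeta+a_1\zeta^2+a_2\zeta^3$. The step you flag as the main obstacle is short. By \eqref{Th1a} for $k=1,2$, $DF(\zeta z_0)z_0=\frac{d}{d\zeta}F(\zeta z_0)=\Phi'(\zeta)z_0+O(\zeta^3)$. Hence $DF(\zeta z_0)\bigl(\frac{\Phi(\zeta)}{\Phi'(\zeta)}z_0\bigr)=F(\zeta z_0)+O(\zeta^4)$. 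Since $[DF(\zeta z_0)]^{-1}$ is holomorphic in $\zeta$ and equals $I$ at $\zeta=0$, this gives $w(\zeta)=\frac{\Phi(\zeta)}{\Phi'(\zeta)}z_0+O(\zeta^4)$. Your formula for $a_2$ in terms of $h$ is the right one; the paper's display for the third coefficient has $h''(0)$ where $\frac12h''(0)$ is meant.

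The genuine gap is sharpness. For $F(z)=z/(1-l_u(z))^{2(1-\alpha)}$ at $z=ru$ you have $a_1=2(1-\alpha)$ and $a_2=(1-\alpha)(3-2\alpha)$, so
\[
|a_2-\mu a_1^2|=(1-\alpha)\,\bigl|2(1-\alpha)(2\mu-1)-1\bigr|.
\]
This equals $(1-\alpha)\max\{1,|2(1-\alpha)(2\mu-1)-1|\}$ only when $|2(1-\alpha)(2\mu-1)-1|\ge 1$. For instance, with $\alpha=0$ and $\mu=\frac34$ it gives $0$ against the bound $1$.

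This map corresponds to $h(\zeta)=\frac{1+\zeta}{1-\zeta}$, i.e.\ to the endpoint $|h'(0)|=2$ of your final maximization. In the other regime the maximum sits at $h'(0)=0$, $h''(0)/2=2$, realized by $h(\zeta)=\frac{1+\zeta^2}{1-\zeta^2}$. So you need a second extremal, e.g.\ $F(z)=z/(1-l_u(z)^2)^{1-\alpha}$:
\begin{itemize}
\item It satisfies \eqref{Th1a} with $H_{F,1}=H_{F,3}=0$ and $H_{F,2}(z)=(1-\alpha)l_u(z)^2$.
\item By \eqref{Th-1.2a}, $\frac{\|z\|}{l_z([DF(z)]^{-1}F(z))}=\alpha+(1-\alpha)\frac{1+\lambda^2}{1-\lambda^2}$ with $\lambda=l_u(z)\in\mathbb{U}$, so $F\in\mathcal{S}^*_\alpha(\mathbb{B})$.
\item At $z=ru$ it gives $a_1=0$ and $a_2=1-\alpha$, i.e.\ equality.
\end{itemize}

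The paper cites the sharpness argument in the proof of Theorem~\ref{Th-1} for this theorem. That argument asserts equality for the single Koebe-type map and has exactly the same defect, so it does not settle sharpness either.
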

\begin{proof} Let $z\in \mathbb{B}\backslash\{0\}$ be fixed and let $z_0=z/\|z\|_\infty$. Now proceeding in the same way as done in the proof of Theorem \ref{Th-1}, we can prove that $h\in\mathcal{H}(\mathbb{U})$ with $\Re\{h(\zeta)\}>0$ for all $\zeta\in\mathbb{U}$ and $h(0)=1$, where $h(\zeta)$ is defined by \eqref{Th-1.1}. 
Let
	\begin{align}\label{Th-1a.2}
		\varphi(z)=z+\sum\limits_{n=2}^{\infty} \frac{1}{n!}D^n \varphi(0)(z^n),\quad z\in\mathbb{B},
	\end{align}
where $\varphi(z)=[DF(z)]^{-1}F(z)$ with $D \varphi(0)=I$.
Clearly from \eqref{Th-1a.2}, we have
\begin{align}\label{Th-1a.8}
	l_{z_0}(\varphi(\zeta z_0))=\zeta+\frac{l_{z_0}(D^2 \varphi(0)(z_0^2))}{2!}\zeta^2+\frac{l_{z_0}(D^3 \varphi(0)(z_0^3))}{3!}\zeta^3+\ldots.
\end{align}

As in the proof of Theorem~\ref{Th-1}, define
$g(\zeta)=\frac{\zeta}{l_{z_0}(\varphi(\zeta z_0))}$.\\
Using \eqref{Th-1.3} and \eqref{Th-1a.8} to \eqref{Th-1.1}, we deduce that
\begin{align*}
	&\left\lbrace1+(1-\alpha)h'(0)\zeta+(1-\alpha)\frac{h''(0)}{2}\zeta^2+\ldots \right\rbrace\times\\&\left\lbrace \zeta+\frac{l_{z_0}(D^2 \varphi(0)(z_0^2))}{2!}\zeta^2+\frac{l_{z_0}(D^3 \varphi(0)(z_0^3))}{3!}\zeta^3+\ldots\right\rbrace=\zeta.
\end{align*}

A straightforward computation yields
\begin{align}\label{Th-1a.9}
	\frac{l_{z_0}(D^2 \varphi(0)(z_0^2))}{2!}=&-(1-\alpha)h'(0),\\\label{Th-1a.10}
	\frac{l_{z_0}(D^3 \varphi(0)(z_0^3))}{3!}=&((1-\alpha)h'(0))^2-(1-\alpha)\frac{h''(0)}{2!}.
\end{align}
	
 Let
\begin{align}\label{Th-1a.1}
F(z)=z+\sum\limits_{n=2}^{\infty} \frac{1}{n!}D^n F(0)(z^n),\quad z\in\mathbb{B}.
\end{align}

Clearly
\begin{align}\label{Th-1a.3}
DF(z)=I+\sum\limits_{n=2}^{\infty} \frac{n}{n!}D^n F(0)(z^{n-1}),\quad z\in\mathbb{B}.
\end{align}

Now substituting \eqref{Th-1a.1}-\eqref{Th-1a.3} into $DF(z) \varphi(z)=F(z)$ and then comparing coefficients of equal powers of $z$ in the identity, we deduce that
\begin{align}\label{Th-1a.4}
\frac{D^2F(0)(z^2)}{2!}=&-\frac{D^2\varphi(0)(z^2)}{2!},\\\label{Th-1a.5}
-\frac{D^3F(0)(z^3)}{3}=&\frac{1}{3!}D^3\varphi(0)(z^3)+D^2F(0)\left(z,\frac{D^2\varphi(0)(z^2)}{2!}\right).
\end{align}

By assumptions \eqref{Th1a}, we deduce from \eqref{Th-1a.9} and \eqref{Th-1a.4} that
\begin{align}\label{Th-1a.7}
\frac{l_{z_0}(D^2F(0)(z_0^2))}{2!}=&H_{F,1}(z_0)=-\frac{l_{z_0}(D^2\varphi(0)(z_0^2))}{2!}=(1-\alpha)h'(0)\\\label{Th-1a.11}
	\frac{D^2F(0)(z_0^2)}{2!}=&H_{F,1}(z_0)z_0=-\frac{D^2\varphi(0)(z_0^2)}{2!}.
	\end{align}

Since $z_0=z/||z||$, \eqref{Th-1a.7} yields	
\begin{align}\label{Th-1a.12}
\frac{l_{z_0}(D^2F(0)(z^2))}{2!||z||^2}=(1-\alpha)h'(0).
\end{align}

Again by assumptions \eqref{Th1a}, we can deduce from \eqref{Th-1a.9},  \eqref{Th-1a.10}, \eqref{Th-1a.5} and \eqref{Th-1a.11} that
\begin{align*}
\frac{l_{z_0}(D^3F(0)(z_0^3))}{3!}=&H_{F,2}(z_0)\\=&
-\frac{1}{2}\left(\frac{l_{z_0}(D^3\varphi(0)(z_0^3))}{3!}+l_{z_0}\left(D^2F(0)\left(z_0,\frac{D^2\varphi(0)(z_0^2)}{2!}\right)\right)\right)\nonumber\\=&
-\frac{1}{2}\left(\frac{l_{z_0}(D^3\varphi(0)(z_0^3))}{3!}+l_{z_0}\left(D^2F(0)\left(z_0,-H_{F,1}(z_0)z_0\right)\right)\right)\nonumber\\=&
-\frac{1}{2}\left(\frac{l_{z_0}(D^3\varphi(0)(z_0^3))}{3!}-H_{F,1}(z_0)l_{z_0}\left(D^2F(0)(z_0^2)\right)\right)\\=&
-\frac{1}{2}\left(\frac{l_{z_0}(D^3\varphi(0)(z_0^3))}{3!}-2(H_{F,1}(z_0))^2\right).\end{align*} 
Hence we have, 
\begin{align}\label{Th-1a.13} &
\frac{l_{z_0}(D^3F(0)(z_0^3))}{3!}\nonumber\\=&
-\frac{1}{2}\left(\frac{l_{z_0}(D^3\varphi(0)(z_0^3))}{3!}-H_{F,1}(z_0)l_{z_0}\left(D^2F(0)(z_0^2)\right)\right)\nonumber\\=&
-\frac{1}{2}\left(\frac{l_{z_0}(D^3\varphi(0)(z_0^3))}{3!}-2\left(\frac{l_{z_0}(D^2\varphi(0)(z_0^2))}{2!}\right)^2\right)\nonumber\\=&
\frac{1}{2}\left((1-\alpha)h''(0)+((1-\alpha)h'(0))^2\right).
\end{align}
Combining \eqref{Th-1a.12} and \eqref{Th-1a.13} with the Carathéodory coefficient estimates exactly as in the proof of Theorem~\ref{Th-1}, we obtain inequality \eqref{Th-A}, which is the required conclusion. The sharpness follows immediately  from Theorem~\ref{Th-1}. This completes the proof.
\end{proof}

\section{{\bf Fekete-Szeg\"{o} inequalities for quasi-convex mappings of type $B$ on complex Banach space}}\label{Sec-3}
Let $X$ and $Y$ be two complex Banach spaces and $\Omega\subset X$ be a domain.
Also let $f:\Omega\rightarrow Y$ be a holomorphic mapping. We say that $f$ is
\emph{convex} on $\Omega$ if $f$ is biholomorphic on $\Omega$ and
$f(\Omega)$ is a convex domain in $Y$, that is, $(1-t)f(z)+tf(w)\in f(\Omega)$; for all $z, w\in\Omega,\; t\in[0,1]$.

A normalized locally biholomorphic mapping $f\in\mathcal{H}(\mathbb{B})$ is said to be quasi-convex of type $B$ if (see \cite[Definition 2.6]{Ham2})
\begin{align}\label{C1}
	\displaystyle
	\Re\left\lbrace l_z\left([Df(z)]^{-1}\left(D^2f(z)(z,z)+Df(z)z\right)\right)\right\rbrace>0,\quad z\in\mathbb{B}.
\end{align}

If $\mathbb{B}$ is the unit ball of $X$, we let $\mathcal{K}_B(\mathbb{B})$ denote the subset of $\mathcal{S}(\mathbb{B})$
consisting of normalized quasi-convex mappings of type $B$ from $\mathbb{B}$ into $X$.\vspace{1.2mm}

Numerous references now address convex mappings in higher dimensions. For a detailed overview, see the recent monographs by Gong \cite{Gong} and Kohr and Liczberski  \cite{Kohr- Liczberski-1998}. Key foundational papers in this area include \cite{Ham4}, \cite{Liu-Liu-2011}, \cite{Mat},
\cite{Roper-Suffridge-1999}-\cite{Su3}, \cite{Xu-Dl-Li}, \cite{Xu-Yang-Liu-Xu-2015}, \cite{Zhang-Liu-2002}.
\vspace{1.2mm}


In this section, we extend Theorem B to the case of complex Banach space.
\begin{theo}\label{Th-3}Let $f\in\mathcal{H}(\mathbb{B},\mathbb{C})$ with $f(0)=1$ and $F(z)=zf(z)\in\mathcal{K}_B(\mathbb{B})$ and let $\mu$ be any complex number. Then
	\begin{align}\label{Th3a}
	\displaystyle
		\left|\frac{l_{z}(D^3F(0)(z^3))}{3!\|z\|^3}-\mu\left(\frac{l_z(D^2F(0)(z^2))}{2!\|z\|^2}\right)^2\right|\leq 
		\frac{1}{3}\max\{1,3|1-\mu|\},
	\end{align}
for $z\in\mathbb{B}\backslash\{0\}$. The inequality \eqref{Th3a} is sharp.
\end{theo}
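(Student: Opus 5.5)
The plan is to reduce Theorem \ref{Th-3} to a one-variable Carath\'eodory problem, exactly as in the proof of Theorem \ref{Th-1}. First I compute the quasi-convexity expression \eqref{C1} for $F(z)=zf(z)$, where $\zeta\in\mathbb{U}$ and $z_0=z/\|z\|$ will be used below. Since $DF(z)w=f(z)w+(Df(z)w)z$, we get $DF(z)z=(f(z)+Df(z)z)z$. Likewise $D^2F(z)(z,z)=\bigl(2Df(z)z+D^2f(z)(z,z)\bigr)z$. Thus $D^2F(z)(z,z)+DF(z)z=\bigl(f(z)+3Df(z)z+D^2f(z)(z^2)\bigr)z$. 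By \eqref{Th-1.2b}, $[DF(z)]^{-1}(cz)=cz/(f(z)+Df(z)z)$. Hence
\begin{align*}
\frac{l_z\left([DF(z)]^{-1}\left(D^2F(z)(z,z)+DF(z)z\right)\right)}{\|z\|}=\frac{f(z)+3Df(z)z+D^2f(z)(z^2)}{f(z)+Df(z)z}.
\end{align*}
For $z\in\mathbb{B}\setminus\{0\}$ fixed, I set $p(\zeta)$ equal to this quotient evaluated at $\zeta z_0$, with $p(0)=1$. The correspondence $l_{\zeta z_0}=\frac{|\zeta|}{\zeta}l_{z_0}$ used in Theorem \ref{Th-1} shows that $\Re\{p\}>0$ on $\mathbb{U}$, so $p\in\mathcal{P}$.

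Next I expand in $\zeta$. Write $a=Df(0)(z_0)$ and $b=\frac{1}{2}D^2f(0)(z_0^2)$. The numerator is $1+4a\zeta+9b\zeta^2+\cdots$ and the denominator is $1+2a\zeta+3b\zeta^2+\cdots$. Comparing coefficients in $p\cdot(\text{denominator})=\text{numerator}$ gives $p_1=2a$ and $p_2=6b-4a^2$, that is,
\begin{align*}
a=\frac{p_1}{2},\qquad b=\frac{p_2+p_1^2}{6}.
\end{align*}
As in \eqref{Th-1.8}, we have $\frac{l_z(D^2F(0)(z^2))}{2!\|z\|^2}=a$ and $\frac{l_z(D^3F(0)(z^3))}{3!\|z\|^3}=b$. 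Therefore the functional in \eqref{Th3a} equals
\begin{align*}
\left|b-\mu a^2\right|=\frac{1}{6}\left|p_2-\frac{1}{2}p_1^2+\frac{3}{2}(1-\mu)p_1^2\right|.
\end{align*}
The triangle inequality together with Lemma \ref{Lem2} bounds this by $\frac{1}{6}\bigl(2+\frac{3|1-\mu|-1}{2}|p_1|^2\bigr)$. With $|p_1|\in[0,2]$ from Lemma \ref{Lem1}, the same linear-in-$|p_1|^2$ maximization as in Figure \ref{fig:optimization} gives the bound $\frac{1}{3}\max\{1,3|1-\mu|\}$.

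For sharpness I use a reduction. For $F(z)=z\,\varphi(l_u(z))$ with $\|u\|=1$, set $g(t)=t\varphi(t)$ and $t=l_u(\zeta z_0)$. Then the quotient above is $1+\frac{tg''(t)}{g'(t)}$. So $F\in\mathcal{K}_B(\mathbb{B})$ whenever $g$ is a normalized convex function on $\mathbb{U}$; biholomorphy follows as for the examples in Theorem \ref{Th-1}. Taking $z=ru$, the functional becomes $|a_3-\mu a_2^2|$ for $g$. When $3|1-\mu|\ge1$ I take $g(t)=t/(1-t)$, which gives $a_2=a_3=1$ and the value $|1-\mu|$. When $3|1-\mu|<1$ I take $g(t)=\frac{1}{2}\log\frac{1+t}{1-t}$, which gives $a_2=0$, $a_3=\frac{1}{3}$ and the value $\frac{1}{3}$.

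The step I expect to be most delicate is this sharpness part: checking that $F(z)=z\varphi(l_u(z))$ is genuinely biholomorphic and quasi-convex of type $B$ on a general Banach ball, including the identity $p=1+tg''/g'$ and its behaviour as $t$ ranges over $l_u(\zeta z_0)\in\mathbb{U}$. The coefficient computation itself is routine.
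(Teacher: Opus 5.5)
Your argument is correct and follows the paper's proof. The paper likewise restricts to the slice $\zeta z_0$, uses the rank-one inverse to turn the type-$B$ expression into a scalar quotient, and compares coefficients. It then finishes with Lemma \ref{Lem2}, the triangle inequality and maximization over $|p_1|\in[0,2]$. The only difference is that the paper works with the reciprocal of your Carath\'eodory function $p$.

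Your sharpness argument is more complete than the paper's. The paper exhibits only the example built from $\frac12\log\frac{1+t}{1-t}$, which attains $\frac13$ and so settles sharpness only when $3|1-\mu|\le 1$. Your second example $F(z)=z/(1-l_u(z))$ covers the regime $3|1-\mu|\ge 1$. You verify it directly through $p=1+tg''(t)/g'(t)=\frac{1+t}{1-t}$ rather than by citing Roper--Suffridge.
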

\begin{figure}[H]
	\centering
	\definecolor{flatfill}{RGB}{255,230,150}
	\definecolor{sidefillL}{RGB}{190,215,255}
	\definecolor{sidefillR}{RGB}{190,215,255}
	\begin{tikzpicture}
		\begin{axis}[
			width=11cm, height=7cm,
			xlabel={$\mu\in\mathbb{R}$},
			ylabel={$\varphi(\mu)$},
			xmin=-1, xmax=3.2, ymin=0, ymax=2.1,
			axis lines=middle,
			axis line style={-Latex,thick},
			xtick={-1,-0.5,0,0.5,2/3,1,4/3,1.5,2,2.5,3},
			xticklabels={$-1$,$-\tfrac12$,$0$,,,$1$,,,$2$,$\tfrac52$,$3$},
			ytick={0,1/3,0.5,1,1.5,2},
			yticklabels={$0$,,,$1$,,$2$},
			grid=both,
			major grid style={gray!22,line width=0.3pt},
			minor grid style={gray!10,line width=0.15pt},
			minor x tick num=1,
			minor y tick num=1,
			domain=-1:3, samples=200,
			thick,
			legend style={
				draw=black!45,
				fill=white,
				rounded corners=1pt,
				font=\scriptsize,
				legend pos=outer north east,
				row sep=3pt,
				legend cell align={left},
			},
			]
			
			\addplot[draw=none, fill=sidefillL, fill opacity=0.55, domain=-1:2/3]
			{max(1,3*abs(1-x))/3} \closedcycle;
			\addplot[draw=none, fill=sidefillR, fill opacity=0.55, domain=4/3:3]
			{max(1,3*abs(1-x))/3} \closedcycle;
			
			\addplot[draw=none, fill=flatfill, fill opacity=0.55, domain=2/3:4/3]
			{max(1,3*abs(1-x))/3} \closedcycle;
			
			\addplot[myblue, very thick] {max(1,3*abs(1-x))/3};

			\addplot[red, thick, dashed] coordinates {(2/3,0) (2/3,2)};
			\addplot[red, thick, dashed] coordinates {(4/3,0) (4/3,2)};

			\addplot[dotted, black!55, thin] coordinates {(-1,1/3) (3,1/3)};
			\node[font=\scriptsize, black!65, anchor=east] at (axis cs:-1.03,1/3) {$\tfrac13$};
			
			\fill[red] (axis cs:2/3,1/3) circle (2.6pt);
			\fill[red] (axis cs:4/3,1/3) circle (2.6pt);
			
			\node[
			circle,
			fill=myblue,
			draw=white,
			line width=0.4pt,
			inner sep=2pt
			]
			at (axis cs:1,1/3) {};
			
			\node[align=center, font=\scriptsize] at (axis cs:1,0.62)
			{\textbf{flat region}\\[-1pt] $\varphi\equiv\tfrac13$};
			
			\node[
			rotate=-45,
			font=\footnotesize\bfseries,
			text=myblue,
			text opacity=1,
			inner sep=1.5pt,
			rounded corners=1pt
			]
			at (axis cs:-0.38,1.57)
			{$\varphi(\mu)=1-\mu$};
			
			\node[
			rotate=45,
			font=\footnotesize\bfseries,
			text=myblue,
			text opacity=1,
			inner sep=1.5pt,
			rounded corners=1pt
			]
			at (axis cs:2.38,1.57)
			{$\varphi(\mu)=\mu-1$};
			
			\node[font=\scriptsize, red, below] at (axis cs:2/3,-0.06) {$\tfrac23$};
			\node[font=\scriptsize, red, below] at (axis cs:4/3,-0.06) {$\tfrac43$};

		\end{axis}
	\end{tikzpicture}
	\caption{
		The sharp bound $$\varphi(\mu)=\frac13\max\{1,\,3|1-\mu|\},$$		appearing in Theorem~\ref{Th-3}. The bound is constant on the interval 		$\mu\in[\frac23,\frac43]$ and increases linearly outside this interval.
		The transition points $\mu=\frac23$ and $\mu=\frac43$ separate the constant
		and linear extremal regimes.
	}
	\label{fig:bound}
\end{figure}

\begin{figure}[H]
	\centering
	\begin{tikzpicture}[scale=3]
		\draw[->] (-0.1,0) -- (2.05,0)
		node[right] {$\Re\mu$};
		\draw[->] (0,-0.9) -- (0,0.9)
		node[above] {$\Im\mu$};
		
		\fill[orange!20] (1,0) circle (0.3333);
		\draw[orange!80!black,very thick]
		(1,0) circle (0.3333);
		
		\fill (1,0) circle (0.02);
		\node[below] at (1,0) {$1$};
		
		\draw[dashed] (1,0) -- (1.3333,0);
		\node[below] at (1.17,0) {$\tfrac13$};
		
		\node[align=center] at (1,0.14)
		{$\varphi(\mu)=\tfrac13$};
		
		\node[align=center] at (1.58,0.55)
		{$\varphi(\mu)=|\mu-1|$};
	\end{tikzpicture}
	\caption{
		Geometry of the sharp bound
		\[
		\varphi(\mu)=\frac13\max\{1,3|1-\mu|\},
		\qquad \mu\in\mathbb C.
		\]
		The bound equals $\frac13$ in the disk
		$|\mu-1|\le\frac13$ and equals $|\mu-1|$ outside this disk.
	}
	\label{fig:convex-complex-bound}
\end{figure}

\begin{rem}
	Although Theorem~\ref{Th-3} is valid for every complex parameter $\mu$, the bound depends only on the distance $|\mu-1|$. Hence it is radially symmetric about $\mu=1$ in the complex plane. Figure~\ref{fig:bound} shows its restriction to the real axis, which completely determines the sharp bound.
\end{rem}
\begin{proof}[\bf Proof of Theorem \ref{Th-3}]
	By hypothesis, $F$ is a convex mapping. Using \eqref{C1} we have
	\begin{align*}
	\displaystyle
	\Re\left\lbrace l_z\left([DF(z)]^{-1}\left(D^2 F(z)(z^2)+DF(z) z\right)\right)\right\rbrace>0,\quad z\in\mathbb{B}\backslash\{0\},\quad l_z\in T(z)
	\end{align*}
	or equivalently
	\begin{align*}
	\displaystyle
		\Re\left\lbrace \frac{\|z\|}{l_z\left([DF(z)]^{-1}\left(D^2 F(z)(z^2)+DF(z) z\right)\right)}\right\rbrace>0, \quad z\in\mathbb{B}\backslash\{0\}.
	\end{align*}

	Let $z\in \mathbb{B}\backslash\{0\}$ be fixed and let $z_0=z/\|z\|$. We define the function $g:\mathbb{U}\to \mathbb{C}$ as follows
	\begin{align*}	
	\displaystyle
		g(\zeta)=\begin{cases}
			\frac{\zeta}{l_{z}\left([DF(\zeta z_0)]^{-1}\left(D^2 F(\zeta z_0)(\zeta^2z_0^2)+DF(\zeta z_0) \zeta z_0\right)\right)},& \zeta\neq 0,\\[8pt]
			1& \zeta=0.
		\end{cases}
	\end{align*}

Clearly $g\in\mathcal{H}(\mathbb{U})$ and $g(0)=1$.
Therefore for $\zeta\in\mathbb{U}\backslash\{0\}$, we have
\begin{align*}
\displaystyle
	g(\zeta)=&\frac{\zeta}{l_{z}\left([DF(\zeta z_0)]^{-1}\left(D^2 F(\zeta z_0)(\zeta^2z_0^2)+DF(\zeta z_0) \zeta z_0\right)\right)}\\=&\frac{||\zeta z_0||}{l_{\zeta z_0}\left([DF(\zeta z_0)]^{-1}\left(D^2 F(\zeta z_0)(\zeta^2z_0^2)+DF(\zeta z_0) \zeta z_0\right)\right)}.
\end{align*}	
Consequently, $\Re\{g(\zeta)\}>0$ for all $\zeta\in\mathbb{U}$.

From the series expansions of $g(\zeta)$ and $F(\zeta z_0)$, it follows that
\begin{align}\label{Th-3.1}
\displaystyle
	g(\zeta)=&1+g'(0)\zeta+\frac{g''(0)}{2}\zeta^2+\ldots,\\\label{Th-3.2}
\displaystyle
	F(\zeta z_0)=&\zeta z_0+\frac{1}{2!}D^2 F(0)((\zeta z_0)^2)+\frac{1}{3!}D^3 F(0)((\zeta z_0)^3)+\ldots.
\end{align}

Clearly, we have
\begin{align}\label{Th-3.3}
\displaystyle
D F(\zeta z_0)=&I+D^2 F(0)(\zeta z_0)+\frac{1}{2}D^3 F(0)((\zeta z_0)^2)+\ldots,\\\label{Th-3.4}
\displaystyle
	D^2 F(\zeta z_0)=&D^2F(0)+D^3F(0)(\zeta z_0)+\ldots
\end{align}

Since $F(z)=f(z)z$, the rank-one inverse formula used in
\eqref{Th-1.2b} gives
\begin{align}\label{Th-3.5}
\displaystyle
	[D F(z)]^{-1}\left(D^2F(z)(z^2)+DF(z) z\right)=\frac{D^2F(z)(z^2)+DF(z) z}{f(z)}\left\lbrack \frac{f(z)}{f(z)+Df(z)z}\right\rbrack.
\end{align}

Now using \eqref{Th-3.3} and \eqref{Th-3.4} to \eqref{Th-3.5}, we obtain
\begin{align}\label{Th-3.6}
\displaystyle
&[D F(\zeta z_0)]^{-1}\left(D^2 F(\zeta z_0)(\zeta^2z_0^2)+DF(\zeta z_0) \zeta z_0\right)\\=&
\left\lbrack \zeta z_0+2D^2 F(0)(\zeta^2 z_0^2)+\frac{3}{2} D^3 F(0)(\zeta^3 z_0^3)+\ldots\right\rbrack\left\lbrack \frac{1}{f(\zeta z_0)+Df(\zeta z_0)\zeta z_0}\right\rbrack.\nonumber	
\end{align}
Since $F(z)=zf(z)$, it follows that
\begin{align*}
\displaystyle
	\frac{D^3F(0)(z^3)}{3!}=\frac{D^2f(0)(z^2)}{2!}z,\quad \frac{D^2F(0)(z^2)}{2!}=Df(0)(z) z\quad \text{and so on}.
\end{align*}

Consequently
\begin{align}\label{Th-3.7}
\displaystyle
	\frac{l_{z}(D^3F(0)(z^3))}{3!}=\frac{D^2f(0)(z^2)}{2!}\|z\|,\quad \frac{l_z(D^2F(0)(z^2))}{2!}=Df(0)(z) \|z\|\quad \text{and so on}.
\end{align}

Now using \eqref{Th-3.7} to \eqref{Th-3.6}, we find that
\begin{align}\label{Th-3.8}
\displaystyle
g(\zeta)\left(1+4Df(0)(z_0)\zeta+\frac{9}{2} D^2f(0)(z_0^2)\zeta^2+\ldots\right)=f(\zeta z_0)+Df(\zeta z_0)\zeta z_0.
\end{align}

Using \eqref{Th-1.4}, \eqref{Th-1.5} and \eqref{Th-3.1} to \eqref{Th-3.8}, we get 
\begin{align}\label{Th-3.9}
\displaystyle
	&\left(1+h'(0)\zeta+\frac{h''(0)}{2}\zeta^2+\ldots\right)\left(1+4Df(0)(z_0)\zeta+\frac{9}{2} D^2f(0)(z_0^2)\zeta^2+\ldots\right)\\=&
	\left(1+Df(0)(z_0)\zeta+\frac{1}{2!}D^2 f(0)(z_0^2)\zeta^2+\ldots\right)+\left(Df(0)(z_0)\zeta+D^2 f(0)(z_0^2)\zeta^2+\ldots\right).\nonumber
\end{align}

Clearly from \eqref{Th-3.9}, we have
\begin{align}\label{Th-3.10}
\displaystyle
	h'(0)=-2Df(0)(z_0)\;\;\text{and}\;\;\frac{h''(0)}{2}=-3D^2f(0)(z_0^2)+8\left(Df(0)(z_0)\right)^2.
\end{align}

Since $z_0=z/||z||$, from \eqref{Th-3.7} and \eqref{Th-3.10}, we obtain 
\begin{align}\label{Th-3.11}
\displaystyle
	\frac{l_{z}(D^3F(0)(z^3))}{3!\|z\|^3}=&-\frac{1}{6}\left(\frac{h''(0)}{2}-2(h'(0))^2\right)\\\label{Th-3.12}
\displaystyle
	\frac{l_z(D^2F(0)(z^2))}{2!\|z\|^2}=&-\frac{1}{2}h'(0).
\end{align}
Now using \eqref{Th-3.11} and \eqref{Th-3.12}, we deduce that 
\begin{align*}
\displaystyle
	\left|\frac{l_{z}(D^3F(0)(z^3))}{3!\|z\|^3}-\mu\left(\frac{l_z(D^2F(0)(z^2))}{2!\|z\|^2}\right)^2\right|=&
	\left|\frac{1}{6}\frac{h''(0)}{2}+\frac{3\mu-4}{12}(h'(0))^2\right|\\=&
	\frac{1}{6}\left|\frac{h''(0)}{2}-\frac{1}{2}(h'(0))^2+\frac{3(\mu-1)}{2}(h'(0))^2\right|\\\leq&
	\frac{1}{6}\left(\left|\frac{h''(0)}{2}-\frac{1}{2}(h'(0))^2\right|+\frac{3|(1-\mu)|}{2}\left|h'(0)^2\right|\right).
\end{align*}
Therefore, using Lemma \ref{Lem2}, we get
\begin{align}\label{Th-3.13}
\displaystyle
	\left|\frac{l_{z}(D^3F(0)(z^3))}{3!\|z\|^3}-\mu\left(\frac{l_z(D^2F(0)(z^2))}{2!\|z\|^2}\right)^2\right|\leq 
	\frac{1}{6}\left(2-\frac{1}{2}|h'(0)|^2+\frac{3|1-\mu|}{2}|h'(0)|^2\right).
\end{align}
On the other hand, by Lemma \ref{Lem1}, we know that $|h'(0)|\in [0,2]$. Therefore maximizing the expression \eqref{Th-3.13} in $|h'(0)|$ over $[0,2]$ gives the result.\vspace{1.2mm}

To establish the sharpness of the bound, let us consider the function $F$ defined by
\begin{align*}
\displaystyle
	F(z)=zf(z)=\frac{f_1(l_u(z))}{l_u(z)}z,\quad z\in\mathbb{B},\quad \|u\|=1,
\end{align*}
where $f_1(l_u(z))=\frac{1}{2}\log \frac{1+l_u(z)}{1-l_u(z)}$. Obviously $f_1\in\mathcal{K}$. According to Corollary 2 \cite{Roper-Suffridge-1999}, $F\in\mathcal{K}_B(\mathbb{B})$. Note that
$f_1(l_u(z))=l_u(z)+\frac{1}{3}(l_u(z))^3+\frac{1}{5}(l_u(z))^5+\ldots.$
Also, we have
\begin{align*}
\displaystyle
F(z)=z+\frac{1}{3}(l_u(z))^2z+\frac{1}{5}(l_u(z))^4z+\ldots.
\end{align*}

It is clear that
\begin{align*}
\displaystyle
	\frac{D^2F(0)(z^2)}{2!}=0\quad \text{and}\quad \frac{D^3F(0)(z^3)}{3!}=\frac{1}{3}(l_u(z))^2z.
\end{align*}
Consequently
\begin{align*}
\displaystyle
	\frac{l_z(D^2F(0)(z^2))}{2!}=0\quad \text{and}\quad
	\frac{l_z(D^3F(0)(z^3))}{3!}=\frac{1}{3}(l_u(z))^2\|z\|.
\end{align*}
Taking $z=ru\;(0<r<1)$, we get
\begin{align*}
\displaystyle
	\frac{l_z(D^2F(0)(z^2))}{2!\|z\|^2}=0\quad \text{and}\quad
	\frac{l_z(D^3F(0)(z^3))}{3!\|z\|^3}=\frac{1}{3}.
\end{align*}
Therefore
\begin{align*}
\displaystyle
	\left|\frac{l_{z}(D^3F(0)(z^3))}{3!\|z\|^3}-\mu\left(\frac{l_z(D^2F(0)(z^2))}{2!\|z\|^2}\right)^2\right|=\frac{1}{3}.
\end{align*}
\end{proof}

\begin{cor}\label{Co-1}Let $f\in\mathcal{H}(\mathbb{B},\mathbb{C})$ with $f(0)=1$ and $F(z)=zf(z)\in\mathcal{K}_B(\mathbb{B})$. Then
	\begin{align*}
	\displaystyle
		\left|\frac{l_{z}(D^3F(0)(z^3))}{3!\|z\|^3}-\left(\frac{l_z(D^2F(0)(z^2))}{2!\|z\|^2}\right)^2\right|\leq 
		\frac{1}{3}\left(1-\left(\frac{l_z(D^2F(0)(z^2))}{2!\|z\|^2}\right)^2\right).
	\end{align*}
	The bound is sharp.
\end{cor}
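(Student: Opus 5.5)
The plan is to reuse, without change, the coefficient representation established in the proof of Theorem \ref{Th-3}. For a fixed $z\in\mathbb{B}\setminus\{0\}$ write
\[
a_2=\frac{l_z(D^2F(0)(z^2))}{2!\|z\|^2},\qquad a_3=\frac{l_z(D^3F(0)(z^3))}{3!\|z\|^3}.
\]
Let $h$ be the Carath\'eodory function built there from $g$. Since $g(0)=1$ and $\Re\{g(\zeta)\}>0$, the function $h$ there is just $g$ itself, so $h\in\mathcal{P}$. Relations \eqref{Th-3.11} and \eqref{Th-3.12} then give
\[
a_2=-\tfrac12 h'(0),\qquad a_3=-\tfrac16\Big(\tfrac{h''(0)}{2}-2(h'(0))^2\Big).
\]

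The key step is the choice $\mu=1$ combined with an exact (not triangle-inequality) computation. Substituting these expressions, we get
\[
a_3-a_2^2=-\tfrac16\Big(\tfrac{h''(0)}{2}-2(h'(0))^2\Big)-\tfrac14 (h'(0))^2=-\tfrac16\Big(\tfrac{h''(0)}{2}-\tfrac12 (h'(0))^2\Big).
\]
So the Fekete--Szeg\"o functional at $\mu=1$ is exactly $-\tfrac16$ times the quantity controlled by Lemma \ref{Lem2}. That lemma, applied to $p=h$ with $p_1=h'(0)$ and $p_2=h''(0)/2$, yields
\[
|a_3-a_2^2|\le \tfrac16\Big(2-\tfrac12|h'(0)|^2\Big)=\tfrac13\Big(1-\tfrac14|h'(0)|^2\Big)=\tfrac13\big(1-|a_2|^2\big).
\]
This is the asserted inequality. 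The right-hand side must be read with the modulus $|a_2|^2$, as in Theorem C; that is the only sensible reading, since $a_2$ is complex in general. I would state this interpretation explicitly.

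For sharpness I would take the extremal mapping from the proof of Theorem \ref{Th-3},
\[
F(z)=\frac{f_1(l_u(z))}{l_u(z)}\,z,\qquad f_1(\zeta)=\tfrac12\log\frac{1+\zeta}{1-\zeta},
\]
evaluated at $z=ru$. There $a_2=0$ and $a_3=\tfrac13$, so both sides equal $\tfrac13$. As a second check I would use $F(z)=z/(1-l_u(z))$, which lies in $\mathcal{K}_B(\mathbb{B})$ by the same Roper--Suffridge criterion with $f_1(\zeta)=\zeta/(1-\zeta)$. At $z=ru$ it gives $a_2=a_3=1$, and both sides vanish.

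I expect essentially no obstacle, since all the analytic work is already contained in \eqref{Th-3.11}--\eqref{Th-3.12} and Lemma \ref{Lem2}. The only points needing care are verifying the algebra that the $(h'(0))^2$ coefficient collapses exactly to $-\tfrac12$, and clarifying the modulus in the bound.
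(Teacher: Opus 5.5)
Your proposal is correct and is essentially the paper's proof. The paper likewise takes $\mu=1$ in \eqref{Th-3.13}, where the extra term vanishes, matching your exact identity $a_3-a_2^2=-\tfrac16\big(\tfrac{h''(0)}{2}-\tfrac12(h'(0))^2\big)$, and then uses \eqref{Th-3.12} to turn $\tfrac14|h'(0)|^2$ into $|a_2|^2$. Your reading of the right-hand side with $|a_2|^2$ is the right one, since the paper's display omits the modulus, and your sharpness check with the logarithmic extremal at $z=ru$ supplies the sharpness claim that the paper's proof of the corollary leaves unaddressed.
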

\begin{proof} Using \eqref{Th-3.12} and \eqref{Th-3.13}, we obtain
\begin{align*}
\displaystyle
	\left|\frac{l_{z}(D^3F(0)(z^3))}{3!\|z\|^3}-\left(\frac{l_z(D^2F(0)(z^2))}{2!\|z\|^2}\right)^2\right|\leq&
	\frac{1}{6}\left(2-\frac{1}{2}|h'(0)|^2\right)\\\leq&
	\frac{1}{3}\left(1-\left(\frac{l_z(D^2F(0)(z^2))}{2!\|z\|^2}\right)^2\right).
\end{align*}
\end{proof}
\begin{rem} In the special case where $X = \mathbb{C}$ and $B = \mathbb{U}$, Corollary \ref{Co-1} reduces to Theorem C.
\end{rem}

\begin{cor}\label{co-2}Let $f\in\mathcal{H}(\mathbb{U}^n,\mathbb{C})$ with $f(0)=1$ and $F(z)=zf(z)\in\mathcal{K}_B(\mathbb{U}^n)$ and let $\mu$ be any complex number. Then
	\begin{align*}
		\displaystyle
		\left\|\frac{D^3F(0)(z^3)}{3!}-\mu\frac{1}{2!}D^2F(0)\left(z,\frac{D^2F(0)(z^2)}{2!}\right)\right\|_{\infty}\leq
		\frac{1}{3}\max\{1,3|1-\mu|\}||z||_{\infty}\nonumber
	\end{align*}
	for all $z\in\mathbb{U}^n$. The bound is sharp.
\end{cor}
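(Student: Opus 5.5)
The plan is to mirror the proof of Theorem~\ref{Th-2}, with the starlikeness computation replaced by the quasi-convex one from the proof of Theorem~\ref{Th-3}.

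\textbf{Step 1: reduction to one complex variable.} Fix $z\in\mathbb{U}^n\setminus\{0\}$ and an index $k$ with $\|z\|_\infty=|z_k|$. Choose the functional in \eqref{eq3} with $t_k=1$ and $t_j=0$ for $j\neq k$. Condition \eqref{C1} for $F$ then reduces to
\[
\Re\{z_k/w_k(z)\}>0,
\qquad
w(z)=[DF(z)]^{-1}\bigl(D^2F(z)(z^2)+DF(z)z\bigr).
\]
With $z_0=z/\|z\|_\infty$, set $g_k(\zeta)=\zeta z_k/w_k(\zeta z_0)$ for $\zeta\neq0$ and $g_k(0)=1$. This gives a Carath\'eodory function $h_k=g_k$ on $\mathbb{U}$.

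\textbf{Step 2: coefficient relations.} Because $F(z)=zf(z)$, formula \eqref{Th-3.5} shows that $w_k(z)$ is $z_k$ times a scalar factor depending only on $f$. Hence the expansion \eqref{Th-3.8}--\eqref{Th-3.9} carries over verbatim, and I expect to recover \eqref{Th-3.10}:
\[
h_k'(0)=-2Df(0)(z_0),\qquad \frac{h_k''(0)}{2}=-3D^2f(0)(z_0^2)+8\bigl(Df(0)(z_0)\bigr)^2.
\]

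\textbf{Step 3: the component estimate.} Use the identities $\frac{D^3F_k(0)(z_0^3)}{3!}=\frac{D^2f(0)(z_0^2)}{2!}(z_0)_k$ and $\frac{1}{2!}D^2F_k(0)\bigl(z_0,\frac{D^2F(0)(z_0^2)}{2!}\bigr)=\bigl(Df(0)(z_0)\bigr)^2(z_0)_k$, exactly as in \eqref{Th-2.12}. Dividing by $(z_0)_k$, which has modulus one, reduces the $k$-th component of the vector functional to
\[
\frac{D^2f(0)(z_0^2)}{2}-\mu\bigl(Df(0)(z_0)\bigr)^2=\frac16\Bigl(\frac{h_k''(0)}2-\frac12h_k'(0)^2\Bigr)+\frac{1-\mu}{4}h_k'(0)^2 .
\]
Lemma~\ref{Lem2} and $|h_k'(0)|\le 2$ from Lemma~\ref{Lem1} then give the bound $\frac13\max\{1,3|1-\mu|\}$, by the same maximization as in \eqref{Th-3.13}.

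\textbf{Step 4: from one component to the sup norm.} This is the main obstacle, since Step 3 controls only the component $k$ realising the norm. The fix is the one used for Theorem~\ref{Th-2}. For $z_0\in\partial_0\mathbb{U}^n$, every coordinate has modulus one, so every index $j$ is admissible. Therefore the estimate of Step 3 holds for all components $j=1,\dots,n$ on the distinguished boundary.

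Each component $P_j(z)=\frac{D^3F_j(0)(z^3)}{3!}-\mu\frac12 D^2F_j(0)\bigl(z,\frac{D^2F(0)(z^2)}{2!}\bigr)$ is a homogeneous polynomial of degree $3$. Fix $z\neq0$ and apply the maximum modulus principle on the polydisk to $P_j(z/\|z\|_\infty)$, using that the bound holds on the distinguished boundary. This yields $|P_j(z)|\le \frac13\max\{1,3|1-\mu|\}\|z\|_\infty^3\le\frac13\max\{1,3|1-\mu|\}\|z\|_\infty$ for $z\in\mathbb{U}^n$. Taking the maximum over $j$ gives the stated inequality.

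\textbf{Sharpness.} Take $F(z)=z\,\frac{f_1(z_1)}{z_1}$ with $f_1(\zeta)=\frac12\log\frac{1+\zeta}{1-\zeta}$. This is the polydisk analogue of the extremal map in Theorem~\ref{Th-3}, and it lies in $\mathcal{K}_B(\mathbb{U}^n)$ by \cite{Roper-Suffridge-1999}. For it, $D^2F(0)=0$ and $\frac{D^3F(0)(z^3)}{3!}=\frac13 z_1^2z$. At $z=(r,0,\dots,0)$ this attains the value $\frac13$, which is the bound in the range $|1-\mu|\le\frac13$.

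For $|1-\mu|>\frac13$ the bound $|1-\mu|$ must be attained by another map whose Carath\'eodory function has $|h'(0)|=2$. One needs $h(\zeta)=\frac{1+\zeta}{1-\zeta}$, which by $h'(0)=-2Df(0)(z_0)$ corresponds to the convex map $F(z)=z/(1-z_1)$. For this map, the component functional at $z=(r,0,\dots,0)$ has modulus $|1-\mu|\,r^3$. Letting $r\to1$ shows the constant $|1-\mu|$ cannot be decreased.
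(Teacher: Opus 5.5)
Your proposal is correct and is essentially the argument the paper intends: the paper states Corollary~\ref{co-2} without proof, as the polydisk analogue of Theorem~\ref{Th-3}, obtained in the same way Theorem~\ref{Th-2} is obtained from Theorem~\ref{Th-1}. Your second extremal map $z/(1-z_1)$ also covers the range $|1-\mu|>\frac13$, which the paper's logarithmic example does not reach. Only harmless slips remain:
\begin{enumerate}
\item The first term of your Step~3 identity should be $-\frac16\bigl(\frac{h_k''(0)}{2}-\frac12h_k'(0)^2\bigr)$; the sign is irrelevant once moduli are taken.
\item $g_k$ should use $(z_0)_k$ rather than $z_k$ if you want $g_k(0)=1$.
\item The logarithmic map gives $\frac13r^3$ at $(r,0,\dots,0)$, so it too needs $r\to1$.
\end{enumerate}
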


\vspace{1.6mm}
Next, by removing the restrictive assumption $F(z) = f(z)z$, we generalize Theorem B to higher dimensions under weaker conditions than those in Theorem \ref{Th-3}.

\begin{theo}\label{Th-3a}
	Let $F$ be a locally biholomorphic mapping on $\mathbb{B}$, and $F$ satisfy the following assumptions:
	\begin{align*}
		\displaystyle
		\frac{D^{k+1}F(0)(z^{k+1})}{(k + 1)!} = H_{F,k}(z)z, \quad z \in \mathbb{B}, \; k = 1, 2, 3,
	\end{align*}
	where $H_{F,k}(z)$ is a homogeneous polynomial of degree $k$ with values in $\mathbb{C}$. Let $\mu$ be any complex number. If $F\in\mathcal{K}_B(\mathbb{B})$, then the inequality \eqref{Th3a} holds and the bound is sharp.
\end{theo}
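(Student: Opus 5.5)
The plan is to mimic the proof of Theorem~\ref{Th-1a}, replacing the starlike quantity $[DF(z)]^{-1}F(z)$ by the quasi-convex quantity appearing in \eqref{C1}. We reduce to the coefficient identities \eqref{Th-3.11}--\eqref{Th-3.12}, with $Df(0)(z_0)$ and $\frac{1}{2}D^2f(0)(z_0^2)$ replaced by $H_{F,1}(z_0)$ and $H_{F,2}(z_0)$. The rest of the proof of Theorem~\ref{Th-3} then goes through verbatim.

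\textbf{Setting up the Carath\'eodory function.} Write $F(z)=z+P_2(z)+P_3(z)+\cdots$ with $P_{k+1}=\frac{1}{(k+1)!}D^{k+1}F(0)(z^{k+1})=H_{F,k}(z)z$. Put
\begin{align*}
\varphi(z)=[DF(z)]^{-1}\left(D^2F(z)(z^2)+DF(z)z\right)=z+Q_2(z)+Q_3(z)+\cdots .
\end{align*}
Fix $z\neq0$ and $z_0=z/\|z\|$, and let $g(\zeta)=\zeta/l_{z_0}(\varphi(\zeta z_0))$ with $g(0)=1$. Using the correspondence $l_{\zeta z_0}=\frac{|\zeta|}{\zeta}l_{z_0}$ exactly as in the proof of Theorem~\ref{Th-3}, condition \eqref{C1} gives $g\in\mathcal{P}$. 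Hence Lemmas~\ref{Lem1} and \ref{Lem2} apply to $h=g$.

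\textbf{Coefficient comparison.} For a homogeneous polynomial $P_m$ of degree $m$ we have $D^2P_m(z)(z,z)=m(m-1)P_m(z)$ and $DP_m(z)z=mP_m(z)$. Therefore
\begin{align*}
D^2F(z)(z^2)+DF(z)z=z+4P_2(z)+9P_3(z)+\cdots .
\end{align*}
Compare homogeneous terms in $DF(z)\varphi(z)=z+4P_2+9P_3+\cdots$.
\begin{itemize}
\item In degree $2$ this gives $Q_2+2P_2=4P_2$, so $Q_2=2P_2=2H_{F,1}(z)z$.
\item In degree $3$ it gives $Q_3+DP_2(z)Q_2(z)+3P_3=9P_3$, so $Q_3=6P_3-2DP_2(z)\bigl(P_2(z)\bigr)$.
\end{itemize}
This is the step I expect to need care. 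Differentiate $P_2=H_{F,1}(z)z$ to get
\begin{align*}
DP_2(z)w=\bigl(DH_{F,1}(z)w\bigr)z+H_{F,1}(z)w .
\end{align*}
By Euler's identity $DH_{F,1}(z)z=H_{F,1}(z)$, and so $DP_2(z)(H_{F,1}(z)z)=2H_{F,1}(z)^2z$. Hence $Q_3=\bigl(6H_{F,2}(z)-4H_{F,1}(z)^2\bigr)z$.

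Since $l_{z_0}(z_0)=1$, we get
\begin{align*}
l_{z_0}(\varphi(\zeta z_0))=\zeta\bigl(1+2H_{F,1}(z_0)\zeta+(6H_{F,2}(z_0)-4H_{F,1}(z_0)^2)\zeta^2+\cdots\bigr).
\end{align*}
Inverting the bracket yields
\begin{align*}
g'(0)=-2H_{F,1}(z_0),\qquad \frac{g''(0)}{2}=8H_{F,1}(z_0)^2-6H_{F,2}(z_0),
\end{align*}
which is precisely \eqref{Th-3.10}.

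\textbf{Conclusion.} Homogeneity and $l_z(z)=\|z\|$ give $\frac{l_z(D^2F(0)(z^2))}{2!\|z\|^2}=H_{F,1}(z_0)$ and $\frac{l_z(D^3F(0)(z^3))}{3!\|z\|^3}=H_{F,2}(z_0)$. So \eqref{Th-3.11} and \eqref{Th-3.12} hold with $h=g$. The chain of estimates through \eqref{Th-3.13}, using Lemma~\ref{Lem2} and then maximizing over $|h'(0)|\in[0,2]$ by Lemma~\ref{Lem1}, gives \eqref{Th3a}. For sharpness, the extremal map $F(z)=\frac{f_1(l_u(z))}{l_u(z)}z$ from the proof of Theorem~\ref{Th-3} has the form $zf(z)$. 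It therefore satisfies the hypotheses on $H_{F,k}$ and attains equality at $z=ru$.
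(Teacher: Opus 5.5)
Your proof of inequality \eqref{Th3a} is correct and follows the route the paper has in mind. The paper's proof is only the remark that the technique of Theorem~\ref{Th-1a} should be combined with the computation of Theorem~\ref{Th-3}. Your explicit degree-$3$ comparison, which uses Euler's identity to get $DP_2(z)\bigl(Q_2(z)\bigr)=4H_{F,1}(z)^2z$, correctly reproduces \eqref{Th-3.10} and supplies exactly the details the paper omits.

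The gap is in the sharpness claim. The map $F(z)=\frac{f_1(l_u(z))}{l_u(z)}z$ with $f_1(\zeta)=\frac12\log\frac{1+\zeta}{1-\zeta}$ has $H_{F,1}\equiv 0$ and $H_{F,2}(u)=\frac13$. So at $z=ru$ the functional equals $\frac13$ for every $\mu$. This equals $\frac13\max\{1,3|1-\mu|\}$ only when $|1-\mu|\le\frac13$. For $|1-\mu|>\frac13$ the bound is $|1-\mu|>\frac13$, so your claimed equality is false there. The paper's own sharpness argument for Theorem~\ref{Th-3} has the same limitation, so deferring to it does not close the gap.

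In the maximization step the value $|1-\mu|$ comes from $|h'(0)|=2$. This points to the map $F(z)=\frac{z}{1-l_u(z)}$, i.e. $f_1(\zeta)=\frac{\zeta}{1-\zeta}$. It has the form $zf(z)$ with $H_{F,1}(z)=l_u(z)$ and $H_{F,2}(z)=l_u(z)^2$. Write $F(tz)=\psi(t)z$ with $\psi(t)=t/(1-t\,l_u(z))$. Then
\[
[DF(z)]^{-1}\bigl(D^2F(z)(z^2)+DF(z)z\bigr)=\Bigl(1+\frac{\psi''(1)}{\psi'(1)}\Bigr)z=\frac{1+l_u(z)}{1-l_u(z)}\,z .
\]
Its $l_z$-value $\|z\|\frac{1+l_u(z)}{1-l_u(z)}$ has positive real part, since $|l_u(z)|\le\|z\|<1$. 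The map is locally biholomorphic, because $f(z)+Df(z)z=(1-l_u(z))^{-2}\neq 0$. It is also injective: $F(z)=F(w)$ forces $z=cw$ with $c=1$. Hence $F\in\mathcal{K}_B(\mathbb{B})$.

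At $z=ru$ you get $H_{F,1}(u)=H_{F,2}(u)=1$, so the functional equals $|1-\mu|$. Use this map for $|1-\mu|\ge\frac13$ and the logarithmic map for $|1-\mu|\le\frac13$; this gives sharpness for every $\mu\in\mathbb{C}$.
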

\begin{proof}
By employing the same techniques as in the proof of Theorem \ref{Th-1a} and following the argument of Theorem \ref{Th-3}, the desired conclusion follows directly. We therefore omit the details.
\end{proof}
\vspace{5mm}

\noindent\textbf{Conflict of interest:} The authors declare that there is no conflict  of interest regarding the publication of this paper.\vspace{1.2mm}

\noindent {\bf Funding:} Not Applicable.\vspace{1.2mm}

\noindent\textbf{Data availability statement:}  Data sharing not applicable to this article as no datasets were generated or analysed during the current study.\vspace{1.2mm}

\noindent {\bf Authors' contributions:} All the authors have equal contributions in preparation of the manuscript.

\end{document}